\chardef\bslash=`\\ 
\newtheorem{thm}{Theorem}[section]
\newtheorem{cor}[thm]{Corollary}
\newtheorem{lem}[thm]{Lemma}
\newtheorem{prop}[thm]{Proposition}
\theoremstyle{definition}
\newtheorem{rem}[thm]{Remark}
\newtheorem{qtn}[thm]{Question}
\theoremstyle{remark}
\newcommand{\eval}[2][\right]{\relax
  \ifx#1\right\relax \left.\fi#2#1\rvert}
\begin{document}

\title{The expansion of half-integral polytopes}

\author[J. Cardinal]{Jean Cardinal}
\address{Universit{\'e} libre de Bruxelles (ULB), Belgium}
\email{jean.cardinal@ulb.be}

\author[L. Pournin]{Lionel Pournin}
\address{Universit{\'e} Paris 13, Villetaneuse, France}
\email{lionel.pournin@univ-paris13.fr}


\begin{abstract}
The expansion of a polytope is an important parameter for the analysis of the random walks on its graph. A conjecture of Mihai and Vazirani states that all $0/1$-polytopes have expansion at least 1. We show that the generalization to half-integral polytopes does not hold by constructing $d$-dimensional half-integral polytopes whose expansion decreases exponentially fast with $d$. We also prove that the expansion of half-integral zonotopes is uniformly bounded away from~$0$. As an intermediate result, we show that half-integral zonotopes are always graphical.
\end{abstract}

\maketitle

\section{Introduction}
\label{sec:intro}


Consider a graph $G$ with vertex set $V$. The \emph{Cheeger constant} of $G$, also known as the \emph{edge expansion} of $G$ is the quantity 
\begin{equation}\label{sec:intro.eq.1}
h(G)=\min \biggl\{ \frac{|\partial{S}|}{\min\{|S|,|V\mathord{\setminus{S}}|\}} : S\subset V\biggl\} ,
\end{equation}
where $\partial{S}$ denotes the set of the edges of $G$ with a vertex in $S$ and the other in $V\mathord{\setminus}S$. This quantity allows to determine whether $G$ has a bottleneck in the sense that only a small set of its edges separates two large subsets of its vertices. Similar isoperimetric numbers can be defined. For example, replacing $\partial{S}$ in (\ref{sec:intro.eq.1}) with the number of vertices of $S$ incident to an edge of $G$ whose other vertex is not in $S$ gives rise to the vertex expansion of $G$. This kind of isopetrimetric numbers have many applications. In particular, hey play an important role in the analysis of random walks and of algorithms for sampling and counting combinatorial objects (see \cite{HooryLinialWigderson2006} for a comprehensive overview).

In this article, we are mainly interested in the notion of edge expansion and will refer to this notion for short as the \emph{expansion} of a graph. The graphs we investigate are \emph{polytopal} graphs. More precisely, if $P$ is a polytope---a convex hull of a finite subset of $\mathbb{R}^d$---, then its vertices and edges form a graph which we call the \emph{graph of $P$}. We will indifferently refer to the expansion of that graph as the expansion of $P$. A famous conjecture of Mihail and Vazirani (see~\cite{FederMihail1992,Kaibel2004} and \cite[Conjecture 19.2.11]{Kalai2017}) states that $0/1$-polytopes---convex hulls of points from $\{0,1\}^d$---always have expansion at least 1. The conjecture has been thoroughly studied from different points of view \cite{Kaibel2004,AnariLiuGharanVinzant2019,KwokLauTung2022,LerouxRademacher2023} and is known to hold for sereval families of $0/1$-polytopes as for instance simple $0/1$-polytopes~\cite{Kaibel2004} or base polytopes of matroids~\cite{AnariLiuGharanVinzant2019} but remains open in general.

We show that if instead of asking that the vertices of a polytope $P$ belong to $\{0,1\}^d$, we allow them to belong to $\{0,1/2,1\}^d$, then we can construct polytopes whose edge expansion goes exponentially fast to $0$ as $d$ grows large. Polytopes whose vertices are drawn from $\{0,1/2,1\}^d$ are called \emph{half-integral}, and are of wide interest in combinatorics and optimization~\cite{BraunPokutta2011,DelPiaMichini2016,GuoJerrum2023}.

Our first main result is the following.

\begin{thm}
\label{thm:hip}
There exists half-integral polytopes of arbitrarily large dimension $d$ whose expansion is less than $d/\sqrt{2}^d$.
\end{thm}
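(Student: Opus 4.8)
The plan is to exploit directly that $h(P)$ is itself a minimum over vertex subsets: to prove the theorem it suffices, for each even $d$, to exhibit a single half-integral $d$-polytope $P$ together with one subset $S$ of its vertex set $V$ for which
\begin{equation*}
\frac{\abs{\partial S}}{\min\{\abs{S},\abs{V\setminus S}\}}<\frac{d}{\sqrt{2}^{\,d}} .
\end{equation*}
So I want $S$ to be \emph{balanced}, with both $\abs{S}$ and $\abs{V\setminus S}$ of order $2^{d/2}$, while the edge boundary $\partial S$ stays of order $d$. This already pins down the target parameters, and it shows the family must be extremal in a precise sense: by Balinski's theorem the graph of a $d$-polytope is $d$-connected, hence $d$-edge-connected, so $\abs{\partial S}\ge d$ for every proper nonempty $S$, and therefore $h(P)\ge 2d/\abs{V}$ for \emph{every} $d$-polytope. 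Consequently the construction can beat $d/\sqrt{2}^{\,d}$ only if $\abs{V}>2^{d/2+1}$, and to get close to the bound it should realise a \emph{balanced minimum cut}: two large ``lobes'', each with about $2^{d/2}$ vertices, joined by a \emph{neck} of only $\Theta(d)$ edges, with $S$ taken to be one lobe.

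I would build such a family recursively, doubling the vertex count and spending two coordinates at each step. From a half-integral polytope $P_k\subset\mathbb{R}^{2k}$ I would form $P_{k+1}\subset\mathbb{R}^{2k+2}$ as the convex hull of two affinely placed half-integral copies of $P_k$, using the two new coordinates to pull the copies apart so that the only edges running between them form a neck of $\Theta(k)$ edges. After reaching dimension $d=2k$ one has $\abs{V}=\Theta(2^{k})=\Theta(2^{d/2})$ vertices and a balanced cut $S$ (one copy) with $\abs{\partial S}=\Theta(d)$, whence $h(P)=O(d/2^{d/2})$; a careful choice of the constants then pushes the ratio below $d/\sqrt{2}^{\,d}$. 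The bookkeeping steps — verifying half-integrality and full-dimensionality at each stage, and counting $\abs{V}$, $\abs{S}$ and $\abs{\partial S}$ — are routine once the geometry of the join is controlled.

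The hard part, and where essentially all the work lies, is realising this thin neck half-integrally \emph{and} certifying its exact size. The difficulty is genuine rather than technical: when two copies are separated by a translation, convexity forces the edges between them to follow the silhouette of a copy in the separating direction, and for ``fat'' lobes such as cubes every such silhouette is (a projection giving) a zonotope that generically carries exponentially many vertices. A naive placement of cube-like lobes therefore produces an \emph{exponential} neck, either a facet-to-facet matching of size $2^{d/2}$ or a still larger silhouette, and yields only $h\approx\mathrm{const}$, matching the behaviour of the cube itself. The construction must instead engineer lobes that are \emph{pointed} in the separating direction, so that only $\Theta(d)$ vertices lie on the relevant silhouette while the bulk of each lobe is ``hidden''. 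Accordingly the crux of the proof is twofold: first, choosing half-integral coordinates for the two copies so that each presents only a thin silhouette to the other; and second, certifying by explicit supporting linear functionals exactly which segments between the copies are genuine $1$-faces of $P$, so that $\abs{\partial S}$ is provably $\Theta(d)$ and not accidentally the full bipartite boundary. Keeping this neck linear in $d$ uniformly along the recursion is the main obstacle I expect to confront.
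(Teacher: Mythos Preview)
Your proposal correctly identifies the target parameters and the natural ``two lobes joined by a thin neck'' picture, but it is an outline rather than a proof: you never actually construct the polytopes. You explicitly flag the crucial step---engineering half-integral lobes that are ``pointed'' in the separating direction so that the neck stays of size $\Theta(d)$, and then certifying this with supporting functionals---as ``the main obstacle I expect to confront'', and you do not confront it. The recursive doubling scheme is only sketched; no coordinates are written down, half-integrality is not checked, and it is not argued that the pointedness required at step $k$ is inherited by $P_{k+1}$ so that the recursion can continue. Since, as you yourself observe, a naive join of two fat lobes produces an \emph{exponential} neck, the entire content of the theorem lives precisely in this unresolved step. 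As it stands there is no construction and hence no proof.

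It is worth noting that the paper's argument takes a completely different route that sidesteps the thin-neck difficulty altogether. Rather than aiming for the Balinski-extremal regime $\abs{\partial S}=\Theta(d)$ with $\abs{S}=\Theta(2^{d/2})$, the paper builds an explicit polytope $\Xi$ with far more vertices (of order $2^{3d/2}/\sqrt{d}$) and a far larger cut (of order $\sqrt{d}\,2^d$ edges), whose ratio nonetheless achieves $d/\sqrt{2}^{\,d}$. Concretely, $\Xi$ is obtained from $[0,1]^d$ by keeping the $0/1$ vertices in the central slab $(d-1)/2\le\sum_i x_i\le(d+1)/2$ and replacing the remaining cube vertices by the centers of the $(d-1)/2$-dimensional faces of the cube lying outside that slab; the cut is by the hyperplane $\sum_i x_i=d/2$, and the only edges crossing it are the original cube edges running between the two central hypersimplices. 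All counts are then explicit binomial expressions. In effect the paper trades your delicate neck-control problem for a symmetric construction in which both $\abs{S}$ and $\abs{\partial S}$ are computed exactly, at the cost of a much larger vertex set.
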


The subclass of half-integral \emph{zonotopes}---half-integral polytopes whose all faces are centrally-symmetric---also plays an important role in combinatorics~\cite{DelPiaMichini2016,DezaManoussakisOnn2018,DezaPournin2022}. Our second main result is that the expansion of half-integral zonotopes is bounded away from $0$ independently on the dimension. 

\begin{thm}\label{thm:hiz}
Half-integral zonotopes have expansion at least $7/12$.
\end{thm}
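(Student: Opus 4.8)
The plan is to reduce the statement to a purely combinatorial isoperimetric inequality for graphical zonotopes and then to establish that inequality. By the intermediate result that every half-integral zonotope is graphical, it suffices to bound from below the expansion of the graph of a graphical zonotope $Z_G=\sum_{\{i,j\}\in E}[0,e_i-e_j]$, where $G$ is a simple graph; parallel generators coming from repeated edges do not change the combinatorics, so multiplicities can be ignored. I would first record the standard combinatorial model of this graph: its vertices are the chambers of the graphic hyperplane arrangement of $G$, equivalently the acyclic orientations of $G$, and two of them are adjacent exactly when they differ by reversing a single edge whose reversal preserves acyclicity. In this model the degree of a vertex is its number of flippable edges, and the graph is a partial cube whose Djokovi\'c--Winkler classes are indexed by the edges of $G$: deleting the class $E_e$ of an edge $e$ splits the vertex set into the acyclic orientations in which $e$ points one way and those in which it points the other.

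Second, I would isolate the role of connectivity. If $G=G_1\sqcup G_2$ is a disjoint union, then $Z_G$ is the product $Z_{G_1}\times Z_{G_2}$ and its graph is the Cartesian product of the two factor graphs; moreover, if $G$ is connected but has a cut vertex, the same product structure holds over the blocks of $G$, because a cut vertex prevents any cycle from spanning two blocks and the orientations of the blocks then combine freely. I would therefore establish a product inequality for edge expansion showing that such products do not push the expansion below the worst factor, reducing the problem to $2$-connected $G$. I would also note that when $G$ is a forest the generators are linearly independent, so $Z_G$ is a cube with expansion $1$; hence one may assume $G$ contains a cycle. This is precisely the feature that excludes the low-expansion examples: non-graphical even cycles such as $C_8$, whose expansion is $1/2$, simply do not arise as graphs of graphical zonotopes, and the argument must convert this qualitative obstruction into the quantitative bound $7/12$.

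The core step is the isoperimetric estimate for $2$-connected $G$: for every vertex set $S$ with $|S|\le|V|/2$ one must show $|\partial S|\ge\tfrac{7}{12}|S|$. The canonical cuts $E_e$ are the easiest to control, since each is a minimal edge cut partitioning the acyclic orientations according to the direction of $e$. For a general set $S$ I would bound $|\partial S|$ by combining the contributions of $S$ to the various classes $E_e$, using an averaging argument over the edges of $G$ together with the recursive fiber structure obtained by deleting an edge: the projection that forgets the orientation of a fixed edge $e$ maps the graph of $Z_G$ onto that of $Z_{G-e}$ with fibers of size $1$ or $2$, the size-$2$ fibers being joined by an edge of $E_e$. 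This lets one set up an induction on the number of edges in which the inductive estimate for $Z_{G-e}$ is transferred to $Z_G$ while the doubled fibers supply the extra boundary needed to maintain the ratio. I expect the binding constraint to come from a small $2$-connected graph, such as the triangle whose zonotope is the hexagon of expansion $2/3$, together with the first few permutohedra, and the induction to be calibrated so that the worst surviving constant is exactly the claimed $7/12$.

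The main obstacle is this last estimate: the canonical generator cuts behave well, but an arbitrary subset $S$ may cut across many classes $E_e$ at once, so controlling $|\partial S|$ uniformly requires genuinely exploiting the graphical, partial-cube structure rather than treating the cuts independently. Making the inductive transfer tight enough to yield the precise constant $7/12$, and verifying that no connected graph does worse than the base cases, is where the real work lies; a looser version of the same argument would only deliver a weaker constant.
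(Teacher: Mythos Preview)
Your plan has a genuine gap, and it stems from using a weaker structural input than the paper actually provides. You invoke only that every half-integral zonotope is graphical, and then set out to bound the expansion of \emph{all} graphical zonotopes $Z_G$, reducing to $2$-connected $G$ and proposing an induction on edges via the deletion fibration. But the paper proves more: every half-integral zonotope is, up to an affine bijection, the graphical zonotope of a graph of maximum degree at most~$2$. Such graphs are disjoint unions of paths and cycles, so the only building blocks one ever needs are hypercubes (graphical zonotopes of paths) and the graphical zonotopes $Z(C_d)$ of cycles. Your reduction to arbitrary $2$-connected $G$ is therefore attacking a much broader problem than required, and the vague inductive scheme you outline---which you yourself flag as the place ``where the real work lies''---is neither completed nor clearly capable of yielding the exact constant $7/12$.

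The paper's route, by contrast, is short once the degree-$2$ structure is in hand. It uses canonical paths and the congestion bound $h(G)\ge 1/(2\rho)$: hypercubes have congestion at most $1/2$; the graph of $Z(C_d)$ is identified with the graph of $[0,1]^d$ minus the two antipodal vertices, and a small rerouting of the standard hypercube flow shows its congestion is at most $6/7$ for $d\ge4$ (the triangle case $d=3$ gives the $6$-cycle, handled directly). A product lemma says congestion of a Cartesian product is at most the maximum congestion of the factors. Combining these gives congestion at most $6/7$ for every half-integral zonotope, hence expansion at least $7/12$. The constant comes from the $4$-cycle case $Z(C_4)$, not from the hexagon or from any delicate induction over general $2$-connected graphs. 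Your proposal misses both the degree-$2$ restriction and the canonical-paths machinery that makes the bound immediate.
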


A graphical zonotope is the Minkowski sum of a subset of the edges of the standard simplex~\cite{GreeneZaslavsky1983,PadrolPilaudPoullot2023}. Each graphical zonotope is thus naturally associated with the graph $G$ induced by these edges in the graph of the standard simplex and one refers to it as the graphical zonotope of $G$.

As an intermediate step to proving Theorem~\ref{thm:hiz}, we show that all half-integral zonotopes are graphical in the following sense.

\begin{thm}\label{thm:grz}
Every half-integral zonotope coincides, up to an affine, bijective transformation with a graphical zonotope.
\end{thm}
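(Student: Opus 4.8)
The plan is to work with the generators of the zonotope and to reduce the statement to a question about a single vector configuration. Recall that a zonotope $Z$ can be written as $Z=z_0+\sum_{i=1}^n[0,v_i]$, where the $v_i$ are its edge vectors, one per parallel class, and that this data is intrinsic: the set $\{\pm v_i\}$ is determined by $Z$ up to translation. An affine bijection $x\mapsto Ax+c$ sends $Z$ to $c+Az_0+\sum_i[0,Av_i]$, so $Z$ is affinely equivalent to the graphical zonotope of a graph $G$ on vertex set $[m]$ precisely when there is a linear isomorphism $A$ between the relevant linear spans, together with a choice of signs, such that $Av_i=\pm(e_{a_i}-e_{b_i})$ for some assignment of each generator to an edge $\{a_i,b_i\}$ of $G$. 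Thus it suffices to prove that the configuration $v_1,\dots,v_n$ is linearly isomorphic to a set of type-$A$ roots $e_a-e_b$; equivalently, that the linear matroid of the generators is graphic and is realised with these unit-root scalings.

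The first key step is a structural constraint coming from half-integrality. Since every vertex of $Z$ lies in $\{0,1/2,1\}^d$, we have $Z\subseteq[0,1]^d$, so the width of $Z$ in each coordinate direction $e_k$ is at most $1$. For a zonotope this width equals $\sum_{i=1}^n\abs{(v_i)_k}$, whence $\sum_i\abs{(v_i)_k}\le 1$ for every $k$, with each entry $(v_i)_k$ in $\{0,\pm\tfrac12,\pm1\}$. Consequently each coordinate (each row of the matrix $[v_1\cdots v_n]$) is supported on at most two generators: either on a single generator with entry $\pm1$ or $\pm\tfrac12$, or on exactly two generators, each with entry $\pm\tfrac12$. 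This sparsity is the whole engine of the proof.

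Next I would extract regularity of the generator matroid from this sparsity. Consider any circuit $\sum_{i\in C}\lambda_i v_i=0$. A coordinate supported on a single generator $i$ forces $\lambda_i=0$, so such \emph{solitary} generators cannot lie in any circuit; they are coloops and will be the bridges of the eventual graph, so I set them aside. For the remaining generators, every coordinate in which $i$ is nonzero is shared with exactly one other generator $j$, and the corresponding scalar equation reads $\pm\lambda_i\pm\lambda_j=0$. Transporting this relation along shared coordinates through the circuit, which is connected in the "share a coordinate" relation by minimality, shows that all $\abs{\lambda_i}$ with $i\in C$ are equal, so after normalisation the circuit has coefficients in $\{0,\pm1\}$. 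Hence the generator matroid is regular, and because the dependencies already carry unit coefficients, the linear isomorphism I am seeking will automatically produce the correct unit-root scalings rather than merely a scaled copy.

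The hard part, and the place where half-integrality must be used a second time, is upgrading \emph{regular} to \emph{graphic} and actually producing the graph $G$. I would build $G$ directly from the coordinate incidences: a two-generator coordinate pairs up generators and should be read as declaring that two edges share an endpoint, while the solitary coordinates pin down pendant (bridge) edges, so that recovering the vertex set becomes a Whitney-type reconstruction of a graph from its cycle space. The obstacle is that not every regular matroid is graphic: the excluded minors $R_{10}$, $M^{*}(K_5)$ and $M^{*}(K_{3,3})$ must be shown not to occur, and I expect this to be the step that genuinely consumes the bound $\sum_i\abs{(v_i)_k}\le1$. The at-most-two-per-coordinate structure, together with the consistency forced by realisability over $\mathbb{R}$ (a balance condition on the signs appearing in the shared coordinates), should be exactly what rules out these configurations and lets the vertex set of $G$ be assembled coherently. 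Once $G$ is in hand, matching generators to its edges and invoking the reduction of the first paragraph completes the proof.
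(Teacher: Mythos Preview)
Your first two steps match the paper exactly: the width bound gives at most two nonzero generators per coordinate (the paper's Proposition~3.1), and from this every circuit has $\pm1$ coefficients (the paper's Lemma~3.2). Where you diverge is in the ``hard part'': you set out to prove the generator matroid is graphic by excluding $R_{10}$, $M^*(K_5)$, $M^*(K_{3,3})$, and you leave this step as an expectation rather than an argument. That is the gap, and it is avoidable, because the matroid is far more constrained than you realise.

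The point you are missing is the paper's Lemma~3.3: if $\mathcal{A}$ is any circuit of the generator set $\mathcal{G}$, then the linear spans of $\mathcal{A}$ and of $\mathcal{G}\setminus\mathcal{A}$ are \emph{orthogonal}. This follows in one line from the sparsity you already have: if $x\in\mathcal{A}$ has a nonzero $i$th coordinate, then the circuit equation forces some other $z\in\mathcal{A}$ to have nonzero $i$th coordinate too, and since at most two generators can be nonzero in coordinate $i$, every $y\in\mathcal{G}\setminus\mathcal{A}$ has $y_i=0$. Thus each circuit is a full connected component of the matroid, the remaining generators are coloops, and the whole matroid is a direct sum of single circuits and coloops. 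No excluded-minor machinery is needed: such a matroid is automatically the cycle matroid of a disjoint union of cycles and bridges, i.e.\ of a graph of maximum degree~$2$. The paper packages this as an induction (peel off a circuit, observe the zonotope splits as a Cartesian product by orthogonality, recurse), together with an explicit check (Lemma~3.4) that a single circuit on $m$ generators is affinely the graphical zonotope of the $m$-cycle. Once you add the orthogonality lemma your outline becomes a complete proof, and in fact yields the stronger statement (the paper's Theorem~3.6) that the associated graph has all degrees at most~$2$.
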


Theorem \ref{thm:grz} may be of independent interest.

Half-integral polytopes and $0/1$-polytopes can be generalized as follows. For any fixed positive integer $k$, a $1/k$-integral polytope is a polytope whose coordinates of vertices are of the form $i/k$ where $i$ is an integer satisying $0\leq{i}\leq{k}$. In view of our results it is natural to the following question.

\begin{qtn}\label{qtn:hiz}
Is the expansion of $1/k$-integral zonotopes bounded away from $0$ by a function that only depends on $k$ (and not on the dimension)?
\end{qtn}

As we shall see, when $k$ is greater than $2$, not all $1/k$-integral zonotopes are graphical, up to an affine transformation. Hence Question \ref{qtn:hiz} can be specialized, in a strict way, to graphical zonotopes. This is particularly relevant because $1/k$-integral graphical zonotopes are precisely the graphical zonotopes of graphs whose degree of all vertices is at most $k$.

\begin{qtn}\label{qtn:grz}
Is the expansion of the graphical zonotopes of graphs whose degree of all vertices is at most $k$ bounded away from $0$ by a function that only depends on $k$ (and not otherwise on the associated graphs)?
\end{qtn}

The article is organized as follows. Section~\ref{CP.sec.2} is dedicated to constructing half-integral polytopes with exponentially small expansion and to the proof of Theorem~\ref{thm:hip}. In Section~\ref{sec:gz}, we prove half-integral zonotopes are graphical in the sense of Theorem \ref{thm:grz}. In Section~\ref{sec:pcp}, we bound the expansion of the graphical zonotopes of cycles using the method described in \cite{Sinclair92} (see also \cite{Kaibel2004}) and establish Theorem~\ref{thm:hiz} as a consequence.

\section{Sparse cuts in half-integral polytopes}\label{CP.sec.2}

In this section, we assume that $d$ is an odd integer such that $(d-1)/2$ is also odd. The main goal of this section is to construct a family of $d$-dimensional half-integral polytopes whose expansion gets exponentially small as $d$ goes to infinity. Consider the hyperplane $H_k$ made of all the points in $\mathbb{R}^d$ whose coordinates sum to $k$. The intersection of the hypercube $[0,1]^d$ with $H_k$ is the hypersimplex $\Delta(d,k)$. Equivalently $\Delta(d,k)$ is the polytopes whose vertices are the points in $\{0,1\}^d$ with exactly $k$ non-zero coordinates.

We are interested in the slab $L$ of $\mathbb{R}^d$ bounded by $H_{(d-1)/2}$ and $H_{(d+1)/2}$:
$$
L=\left\{x\in\mathbb{R}^d:\frac{d-1}{2}\leq\sum_{i=0}^dx_i\leq\frac{d+1}{2}\right\}\mbox{.}
$$

Let $\mathcal{C}$ be the set of all the centers of the $(d-1)/2$-dimensional faces of the $d$-dimensional unit hypercube $[0,1]^d$. We consider the set
$$
\mathcal{V}=\bigl(\mathcal{C}\mathord{\setminus}L\bigr)\cup\bigl(\{0,1\}^d\cap{L}\bigr)\mbox{.}
$$

Informally, $\mathcal{V}$ is made of the vertices of the hypercube $[0,1]^d$ whose sum of coordinates is $(d-1)/2$ or $(d+1)/2$ and of the centers of the $(d-1)/2$-dimensional faces of the hypercube whose sum of coordinates is less than $(d-1)/2$ or greater than $(d+1)/2$. The announced polytope is the convex hull of $\mathcal{V}$ and it will be denoted by $\Xi$. This polytope is shown in Figure \ref{CP.sec.2.fig.1} when $d$ is equal to $3$. In that case, $\Xi$ is just a cube that has been truncated at two opposite vertices. The vertices of the two triangular faces of $\Xi$ are centers of edges of the cube and the other vertices of $\Xi$ are vertices of the cube.

We first examine the vertices of $\Xi$.

\begin{figure}[t]
\begin{centering}
\includegraphics[scale=1]{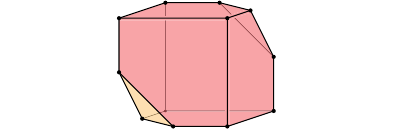}
\caption{The polytope $\Xi$ when $d$ is equal to $3$.}\label{CP.sec.2.fig.1}
\end{centering}
\end{figure}

\begin{lem}\label{CP.sec.2.lem.1}
The vertex set of $\Xi$ is precisely $\mathcal{V}$.
\end{lem}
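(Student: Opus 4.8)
The plan is to establish the two inclusions separately, noting that one of them is free. Since $\Xi=\mathrm{conv}(\mathcal V)$ by definition and $\mathcal V$ is finite, every vertex of $\Xi$ is already one of the points of $\mathcal V$, so the inclusion from the vertices into $\mathcal V$ needs no argument. The entire content is the converse: I must show that each $p\in\mathcal V$ is extreme, that is, $p\notin\mathrm{conv}(\mathcal V\setminus\{p\})$. I would handle all points of $\mathcal V$ uniformly by partitioning the coordinates according to $p$: let $P=\{i:p_i=1\}$, $Z=\{i:p_i=0\}$ and $J=\{i:p_i=1/2\}$, so that $J=\emptyset$ exactly when $p$ is a retained cube vertex and $|J|=(d-1)/2$ when $p$ is a retained face center. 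Write $(d-1)/2=2s+1$, which is the only place the parity hypothesis will be used.

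Suppose $p=\sum_w\lambda_w w$ is a convex combination of points $w\in\mathcal V\setminus\{p\}$. Reading this off on a coordinate $i\in Z$ forces $w_i=0$, and on a coordinate $i\in P$ forces $w_i=1$, for every $w$ with $\lambda_w>0$, because all coordinates of all points lie in $[0,1]$. Hence every such $w$ belongs to the face $\overline F$ of the cube obtained by freezing the coordinates in $P$ to $1$ and those in $Z$ to $0$, leaving the coordinates in $J$ free. The first key step is to identify $\mathcal V\cap\overline F$. I claim that $p$ is the only center of a $(d-1)/2$-face lying in $\overline F$: such a center has exactly $(d-1)/2$ coordinates equal to $1/2$, and lying in $\overline F$ forces all of them into $J$; as $|J|=(d-1)/2$ they exhaust $J$, so the center equals $p$. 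Consequently the points $w\ne p$ in the combination are all retained cube vertices of $\overline F$.

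The second step is the arithmetic, and this is where the hypothesis that $(d-1)/2$ is odd becomes essential. Each retained cube vertex $w$ of $\overline F$ has its $P$-coordinates equal to $1$, contributing $m:=|P|$, and some number $r(w)$ of ones among its $J$-coordinates, with total coordinate sum $m+r(w)$; retention means $m+r(w)\in\{(d-1)/2,(d+1)/2\}$, i.e. $r(w)\in\{2s+1-m,\,2s+2-m\}$. Comparing the sums of the $J$-coordinates on the two sides of $p=\sum_w\lambda_w w$ gives $s+\tfrac12=\tfrac12|J|=\sum_{i\in J}p_i=\sum_w\lambda_w\,r(w)$. But when $p$ lies below the slab one has $m\le s$, forcing $r(w)\ge s+1$ for every $w$; and when $p$ lies above the slab one has $m\ge s+2$, forcing $r(w)\le s$. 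In either case all the $r(w)$ fall strictly on one side of the non-integer value $s+\tfrac12$, so their convex average cannot equal it, a contradiction. When $p$ is a cube vertex, $J=\emptyset$ makes $\overline F=\{p\}$ and no nontrivial combination exists, so $p$ is trivially extreme; alternatively the central symmetry $x\mapsto\mathbf{1}-x$ of $\mathcal V$ reduces the ``above the slab'' cases to the ``below'' ones.

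I expect the main obstacle to be the face-center case, and specifically the structural observation that $\overline F$ contains no face center besides $p$; this is what collapses the extremality question into the clean counting inequality above. The role of the parity assumption is then transparent: requiring $(d-1)/2=2s+1$ to be odd is exactly what places $s+\tfrac12$ strictly between the integers $s$ and $s+1$ that the retained vertices of $\overline F$ are able to realize, so the assumption is genuinely needed rather than a convenience.
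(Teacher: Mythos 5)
Your proof is correct and rests on the same two moves as the paper's: restrict any convex combination representing $p$ to the cube face whose center is $p$, then separate $p$ from the remaining candidates (the cube vertices of that face lying in $L$) by the coordinate-sum functional --- the paper phrases this via faces of $\Xi$ and supporting hyperplanes where you argue directly with convex combinations, and it handles the retained cube vertices by the shorter observation that they are already exposed as vertices of $[0,1]^d$. One small correction to your closing remark: the parity of $(d-1)/2$ is not actually needed for this lemma, since if $(d-1)/2=2s$ the target becomes the integer $s$ while the realizable values $r(w)$ are still all at least $s+1$ (below the slab) or all at most $s-1$ (above it); that hypothesis is only used later, in the count of $|\mathcal{C}\cap L|$.
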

\begin{proof}
Since $\Xi$ is the convex hull of $\mathcal{V}$, it suffices to show that each point in $\mathcal{V}$ is a vertex of $\Xi$. First pick a point $v$ in $\{0,1\}^d\cap{L}$. As $v$ is a vertex of the hypercube $[0,1]^d$, the singleton $\{v\}$ is the intersection of $[0,1]^d$ with some hyperplane $H$ of $\mathbb{R}^d$. Since $\Xi$ is, by definition, a subset of $[0,1]^d$, that singleton is also the intersection of $\Xi$ with $H$ and, therefore, $v$ is a vertex of $\Xi$.

Now pick a point $v$ in $\mathcal{C}\mathord{\setminus}L$ and consider the $(d-1)/2$-dimensional face $F$ of $[0,1]^d$ whose center is $v$. Observe that $\Xi\cap{F}$ is non-empty because this intersection contains $v$. As in addition, $\Xi$ is a subset of $[0,1]^d$, it must admit $\Xi\cap{F}$ as a proper face. Hence, it suffices to show that $v$ is a vertex of $\Xi\cap{F}$. Observe that the only possible vertices of $\Xi$ contained in $\Xi\cap{F}$ are $v$ and the vertices of $L\cap{F}$. Denote by $s$ the sum of the coordinates of $v$ and consider the hyperplane $H$ of $\mathbb{R}^d$ made of the points whose sum of coordinates is $s$. Since $v$ does not belong to $L$, $s$ is either less than $(d-1)/2$ or greater than $(d+1)/2$. Hence, $H$ is disjoint from $L$ and the intersection of $\Xi\cap{F}$ and $H$ is $\{v\}$. This shows that $v$ is a vertex of $\Xi\cap{F}$ and, therefore, a vertex of $\Xi$.
\end{proof}

Let us now count the vertices of $\Xi$. First observe that $\{0,1\}^d\cap{L}$ is the union of the vertex sets of the hypersimplices $\Delta(d,(d-1)/2)$ and $\Delta(d,(d+1)/2)$. Assuming that $\mathbb{R}^d$ is the hyperplane of $\mathbb{R}^{d+1}$ spanned by the first $d$ coordinates, this set is also the orthogonal projection of the vertex set of the hypersimplex $\Delta(d+1,(d+1)/2)$ on $\mathbb{R}^d$. As an immediate consequence,
\begin{equation}\label{CP.sec.2.eq.1}
|\{0,1\}^d\cap{L}|={d+1\choose{(d+1)/2}}\mbox{.}
\end{equation}

There remains to count the points in $\mathcal{C}\mathord{\setminus}L$. 

\begin{lem}\label{CP.sec.2.lem.2}
The number of points in $\mathcal{C}\mathord{\setminus}L$ is
$$
{d\choose{(d-1)/2}}\biggl[2^{(d+1)/2}-{{(d+1)/2}\choose{(d+1)/4}}\biggr]\mbox{.}
$$
\end{lem}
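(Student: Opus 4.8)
The plan is to parametrize the centers in $\mathcal{C}$ explicitly, compute the coordinate sum of each such center, and then use the arithmetic hypotheses on $d$ to decide exactly which centers fall inside the slab $L$.

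First I would describe a generic $(d-1)/2$-dimensional face $F$ of $[0,1]^d$. Such a face is obtained by selecting the $(d-1)/2$ coordinates that are left free (each ranging over $[0,1]$) and assigning to each of the remaining $(d+1)/2$ coordinates a fixed value in $\{0,1\}$. The center of $F$ is then the point whose free coordinates all equal $1/2$ and whose fixed coordinates retain their assigned values. Distinct faces yield distinct centers, since from a center one recovers the free coordinates as precisely those equal to $1/2$ and reads off the fixed values from the rest. Hence the centers are in bijection with these choices, and
$$
|\mathcal{C}| = \binom{d}{(d-1)/2} 2^{(d+1)/2}.
$$

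Next I would compute the coordinate sum of such a center. If $j$ denotes the number of fixed coordinates assigned the value $1$, then the $(d-1)/2$ free coordinates contribute $(d-1)/4$ while the fixed coordinates contribute $j$, so the sum equals $j + (d-1)/4$. Therefore the center lies in $L$ exactly when $(d-1)/2 \le j + (d-1)/4 \le (d+1)/2$, which rearranges into $(d-1)/4 \le j \le (d+3)/4$.

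The crux of the argument, and the only place where the standing hypotheses on $d$ enter, is to show that this double inequality has a single integer solution. Since $d$ is odd with $(d-1)/2$ odd, one has $d \equiv 3 \pmod 4$, so both bounds $(d-1)/4$ and $(d+3)/4$ are non-integers whose difference is $1$, and the unique integer they bracket is $(d+1)/4$ (which is a genuine integer precisely because $d+1 \equiv 0 \pmod 4$). Thus a center lies in $L$ if and only if exactly $(d+1)/4$ of its fixed coordinates equal $1$, and for each of the $\binom{d}{(d-1)/2}$ choices of free coordinates there are $\binom{(d+1)/2}{(d+1)/4}$ such assignments, giving $|\mathcal{C} \cap L| = \binom{d}{(d-1)/2}\binom{(d+1)/2}{(d+1)/4}$. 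Subtracting this from $|\mathcal{C}|$ and factoring out $\binom{d}{(d-1)/2}$ produces the claimed formula. The main obstacle is entirely this modular arithmetic step isolating the value $(d+1)/4$; the remainder is a routine bijective count, and I would be careful to check that $(d+1)/4$ is an integer so that the central binomial coefficient appearing in the answer is well defined.
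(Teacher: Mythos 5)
Your proof is correct and follows essentially the same route as the paper's: count $|\mathcal{C}|$ as $2^{(d+1)/2}\binom{d}{(d-1)/2}$, observe that a center lies in $L$ exactly when the number of its coordinates equal to $1$ is the unique integer $(d+1)/4$ forced by the slab inequalities and the hypothesis that $(d-1)/2$ is odd, and subtract. The only cosmetic difference is that you derive the face count by explicit parametrization rather than quoting the standard formula $2^{d-k}\binom{d}{k}$.
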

\begin{proof}
Recall that the $d$-dimensional hypercube has
$$
2^{d-k}{d\choose{k}}
$$
$k$-dimensional faces. The number of points in $\mathcal{C}$ is precisely the number of the $(d-1)/2$-dimensional faces of the $d$-dimensional hypercube. Hence,
\begin{equation}\label{CP.sec.2.lem.2.eq.1}
|\mathcal{C}|=2^{(d+1)/2}{d\choose{(d-1)/2}}\mbox{.}
\end{equation}

Now consider a point $v$ in $\mathcal{C}\cap{L}$. This point has $(d-1)/2$ coordinates equal to $1/2$ and its other coordinates are either equal to $0$ or to $1$.

However, as $v$ belongs to $L$,
$$
\frac{d-1}{2}\leq\sum_{i=1}^dv_i\leq\frac{d+1}{2}\mbox{.}
$$

Hence, denoting by $k$ the number of coordinates of $v$ that are equal to $1$,
$$
\frac{d-1}{2}\leq\frac{d-1}{4}+k\leq\frac{d+1}{2}
$$
and, in turn,
\begin{equation}\label{CP.sec.2.lem.2.eq.2}
\frac{d+1}{4}-\frac{1}{2}\leq{k}\leq\frac{d+1}{4}+\frac{1}{2}\mbox{.}
\end{equation}

By our assumption that $(d-1)/2$ is odd, $(d+1)/4$ is an integer and (\ref{CP.sec.2.lem.2.eq.2}) implies that $k$ is precisely equal to that integer. This shows that $\mathcal{C}\cap{L}$ is the number of points in $\mathbb{R}^d$ with $(d-1)/2$ coordinates equal to $1/2$, $(d+1)/4$ coordinates equal to $1$, and all other coordinates equal to $0$. Hence,
$$
|\mathcal{C}\cap{L}|={d\choose{(d-1)/2}}{{(d+1)/2}\choose{(d+1)/4}}\mbox{.}
$$

Subtracting the right-hand side of this equality from the right-hand side of (\ref{CP.sec.2.lem.2.eq.1}) results in the desired expression for $|\mathcal{C}\mathord{\setminus}L|$. 
\end{proof}

We now turn our attention to the edges of $\Xi$. We will not need to determine all of them. Indeed, we will only be interested in counting the number of edges that cross the hyperplane $H_{d/2}$ through the center of $[0,1]^d$.

\begin{lem}\label{CP.sec.2.lem.4}
The edges of $\Xi$ whose relative interior is non-disjoint from the hyperplane $H_{d/2}$ are precisely the edges of $[0,1]^d$ that share a vertex with $\Delta(d,(d-1)/2)$ and the other with $\Delta(d,(d+1)/2)$.
\end{lem}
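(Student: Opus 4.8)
The plan is to reduce everything to the behaviour of the one-parameter functional $f(x)=\sum_i x_i$ near the central level $d/2$, and then to analyze the slice $\Xi\cap H_{d/2}$ by matching its vertices with the crossing edges of $\Xi$. First I would record that no vertex of $\Xi$ lies on $H_{d/2}$: the vertices in $\{0,1\}^d\cap L$ have coordinate sum $(d-1)/2$ or $(d+1)/2$, while the computation in the proof of Lemma~\ref{CP.sec.2.lem.2} shows that every point of $\mathcal{C}\setminus L$ has coordinate sum at distance at least $1/2$ from the two bounding hyperplanes of $L$, hence at distance at least $1$ from $H_{d/2}$; in particular none of these sums equals $d/2$. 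Consequently an edge of $\Xi$ meets $H_{d/2}$ in its relative interior if and only if its two endpoints lie strictly on opposite sides of $H_{d/2}$.

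Next I would show that each edge of $[0,1]^d$ joining a vertex $u$ of $\Delta(d,(d-1)/2)$ to a vertex $w$ of $\Delta(d,(d+1)/2)$ is an edge of $\Xi$ crossing $H_{d/2}$. Both $u$ and $w$ belong to $\{0,1\}^d\cap L\subseteq\mathcal{V}$, so the argument of Lemma~\ref{CP.sec.2.lem.1} applies verbatim: the segment $[u,w]$ equals $[0,1]^d\cap H$ for a supporting hyperplane $H$ of the cube, and since $[u,w]\subseteq\Xi\subseteq[0,1]^d$ we get $[u,w]=\Xi\cap H$, a one-dimensional face of $\Xi$. As the coordinate sums of $u$ and $w$ are $(d-1)/2$ and $(d+1)/2$, this edge crosses $H_{d/2}$; conversely these are exactly the edges of $[0,1]^d$ crossing $H_{d/2}$, since an edge of the cube joins two vertices whose coordinate sums differ by $1$.

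The heart of the proof is to show there are no further crossing edges, for which I would identify the slice. Since $\Xi\subseteq[0,1]^d$ we have $\Xi\cap H_{d/2}\subseteq[0,1]^d\cap H_{d/2}$, and the latter is the convex hull of the midpoints of the cube edges crossing $H_{d/2}$. Each such midpoint is the midpoint of an edge of $\Xi$ by the previous step, hence lies in $\Xi$; by convexity $[0,1]^d\cap H_{d/2}\subseteq\Xi\cap H_{d/2}$, so the two slices coincide and the vertices of $\Xi\cap H_{d/2}$ are precisely these midpoints. Now, because no vertex of $\Xi$ lies on $H_{d/2}$, intersecting with $H_{d/2}$ sends each crossing edge of $\Xi$ to a vertex of the slice, and this assignment is injective, as two distinct edges cannot share a relative interior point. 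The crossing edges produced in the previous step already surject onto all vertices of the slice, so the injection leaves room for no additional crossing edge; hence the crossing edges of $\Xi$ are exactly the cube edges joining $\Delta(d,(d-1)/2)$ to $\Delta(d,(d+1)/2)$.

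I expect the main obstacle to be this last step: justifying cleanly that intersection with $H_{d/2}$ is a well-defined injection from crossing edges of $\Xi$ to vertices of the slice. This rests on the standard fact that, when a hyperplane avoids every vertex of a polytope, the vertices of the resulting slice are exactly the transversal intersection points of its edges (so that a crossing edge, being a face, meets $H_{d/2}$ in a $0$-dimensional face of the slice). Combining this injectivity with the surjectivity of the already-identified cube edges is what forces the list of crossing edges to be complete, thereby ruling out crossing edges incident to a point of $\mathcal{C}\setminus L$ or joining two hypersimplex vertices that differ in more than one coordinate.
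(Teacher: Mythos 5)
Your proof is correct, but it takes a genuinely different route from the paper's. The paper argues edge by edge: given an edge $e$ of $\Xi$ whose relative interior meets $H_{d/2}$, it shows that the two endpoints must lie exactly on $H_{(d-1)/2}$ and $H_{(d+1)/2}$ (if an endpoint lay strictly below $H_{(d-1)/2}$, a relative-interior point of $e$ would land in the hypersimplex $\Delta(d,(d-1)/2)$ and hence be a convex combination of other vertices of $\Xi$, contradicting that $e$ is a face); it then concludes that $e\subseteq L$ and invokes $\Xi\cap L=[0,1]^d\cap L$ to identify $e$ as a cube edge. You instead work globally with the central slice: you prove $\Xi\cap H_{d/2}=[0,1]^d\cap H_{d/2}$ and then exploit the standard bijection between the crossing edges of a polytope and the vertices of a slice by a hyperplane avoiding all its vertices, so that the already-exhibited cube edges exhaust all crossing edges by injectivity. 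Both arguments are sound; the paper's is more self-contained and local (it never needs the classification of the vertices of the slice), while yours packages the ``no extra crossing edges'' step into a clean counting argument at the price of invoking the slice--vertex correspondence, and it has the side benefit of identifying the central cross-section of $\Xi$ explicitly. One small point worth making explicit in your write-up: the fact that $[0,1]^d\cap H_{d/2}$ equals the convex hull of the midpoints of the crossing cube edges, and that these midpoints are precisely its vertices, is itself an instance of the same slice--vertex correspondence applied to the cube, so you are really using that lemma twice; stating it once up front would streamline the argument.
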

\begin{proof}
Consider an edge $e$ of $\Xi$ and assume that the relative interior of $e$ is non-disjoint from $H_{d/2}$. Since the coordinates of a vertex of $\Xi$ sum to either at most $(d-1)/2$ or to at least $(d+1)/2$ and the edge $e$ contains a point whose coordinates sum to $d/2$, this edge has a vertex $u$ whose sum of coordinates is at most $(d-1)/2$ and a vertex $v$ whose sum of coordinates is at least $(d+1)/2$. We first prove that $u$ belongs to $H_{(d-1)/2}$ and $v$ to $H_{(d+1)/2}$. By symmetry, it suffices to show that $u$ belongs to $H_{(d-1)/2}$.

Assume for contradiction that $u$ does not belong to $H_{(d-1)/2}$ or in other words that the sum of its coordinates is less than $(d-1)/2$. In that case, a point $x$ in the relative interior of $e$ belongs to $H_{(d-1)/2}$. By convexity, this point also belongs to the hypercube $[0,1]^d$ and we get that $x$ is in the intersection $[0,1]^d\cap{H_{(d-1)/2}}$. However, this intersection is precisely $\Delta(d,(d-1)/2)$. As a consequence, $x$ is a convex combination of the vertices of that hypersimplex, which contradicts our choice that $e$ is an edge of $\Xi$.

It follows from the above that $e$ is an edge of $\Xi$ contained in $L$.  By construction, $\Xi\cap{L}$ coincides with $[0,1]^d\cap{L}$ and therefore, $e$ must be an edge of the hypercube $[0,1]^d$. Inversely, all the edges of $[0,1]^d$ that share a vertex with $\Delta(d,(d-1)/2)$ and the other with $\Delta(d,(d+1)/2)$ are contained in $L$. Hence any such edge of $[0,1]^d$ must be an edge of $\Xi$.
\end{proof}

\begin{thm}\label{CP.sec.2.thm.1}
For any sufficiently large integer $d$ such that both $d$ and $(d-1)/2$ are odd, the expansion of $\Xi$ is less than $d/\sqrt{2}^d$.
\end{thm}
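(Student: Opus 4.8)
The plan is to exhibit a single cut of the graph of $\Xi$ whose ratio in (\ref{sec:intro.eq.1}) is smaller than $d/\sqrt{2}^d$; since the expansion is the minimum of that ratio over all cuts, this will suffice. The natural candidate is the cut induced by the hyperplane $H_{d/2}$: I would take $S$ to be the set of vertices of $\Xi$ whose coordinates sum to at most $(d-1)/2$ and let $V\mathord{\setminus}S$ consist of those whose coordinates sum to at least $(d+1)/2$. Because $d$ is odd, every vertex of $\Xi$ lies strictly on one side of $H_{d/2}$: the vertices in $\{0,1\}^d\cap{L}$ have an integer coordinate sum equal to $(d-1)/2$ or $(d+1)/2$, while those in $\mathcal{C}\mathord{\setminus}L$ have coordinate sum less than $(d-1)/2$ or greater than $(d+1)/2$. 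Hence this is a genuine partition of the vertex set, and $\partial S$ is exactly the set of edges of $\Xi$ whose relative interior meets $H_{d/2}$.

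First I would pin down the denominator $\min\{|S|,|V\mathord{\setminus}S|\}$. The affine involution $x\mapsto\mathbf{1}-x$ maps $[0,1]^d$ onto itself, preserves $\mathcal{C}$ and $\{0,1\}^d$, exchanges $H_{(d-1)/2}$ with $H_{(d+1)/2}$ and hence preserves $L$; it therefore preserves $\mathcal{V}$ and is a symmetry of $\Xi$ that swaps $S$ and $V\mathord{\setminus}S$. Consequently $|S|=|V\mathord{\setminus}S|=|V|/2$, and by Lemma~\ref{CP.sec.2.lem.1} together with (\ref{CP.sec.2.eq.1}) and Lemma~\ref{CP.sec.2.lem.2},
\begin{equation*}
|V|=\binom{d+1}{(d+1)/2}+\binom{d}{(d-1)/2}\biggl[2^{(d+1)/2}-\binom{(d+1)/2}{(d+1)/4}\biggr].
\end{equation*}
For the numerator, Lemma~\ref{CP.sec.2.lem.4} identifies $\partial S$ with the edges of $[0,1]^d$ joining $\Delta(d,(d-1)/2)$ to $\Delta(d,(d+1)/2)$; each of the $\binom{d}{(d-1)/2}$ vertices of $\Delta(d,(d-1)/2)$ has exactly $(d+1)/2$ null coordinates, any one of which may be raised to $1$ to produce such an edge, so $|\partial S|=\binom{d}{(d-1)/2}(d+1)/2$.

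Dropping the positive summand $\binom{d+1}{(d+1)/2}$ from $|V|$ and cancelling the common factor $\binom{d}{(d-1)/2}$ then gives
\begin{equation*}
h(\Xi)\leq\frac{|\partial S|}{|V|/2}<\frac{d+1}{2^{(d+1)/2}-\binom{(d+1)/2}{(d+1)/4}},
\end{equation*}
so the theorem reduces to the inequality $(d+1)2^{d/2}<d\bigl[2^{(d+1)/2}-\binom{(d+1)/2}{(d+1)/4}\bigr]$. The crux of the argument, and the only genuinely quantitative step, is to control the central binomial coefficient $\binom{(d+1)/2}{(d+1)/4}$: writing $2^{(d+1)/2}=\sqrt{2}\cdot2^{d/2}$, I would invoke a standard estimate of the form $\binom{2m}{m}\leq4^m/\sqrt{\pi{m}}$ with $m=(d+1)/4$ to conclude that $\binom{(d+1)/2}{(d+1)/4}=o\bigl(2^{(d+1)/2}\bigr)$, whence $2^{(d+1)/2}-\binom{(d+1)/2}{(d+1)/4}\geq\sqrt{2}\,(1-\varepsilon)2^{d/2}$ for any fixed $\varepsilon>0$ once $d$ is large. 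The target inequality then becomes $d+1<\sqrt{2}\,(1-\varepsilon)d$, which holds for all sufficiently large $d$ because $\sqrt{2}>1$. I expect this asymptotic comparison to be the main obstacle, since everything else is combinatorial bookkeeping supplied by the preceding lemmas; care is needed only to keep the inequalities strict and to record how large $d$ must be taken.
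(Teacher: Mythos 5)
Your proposal is correct and follows essentially the same route as the paper: the same cut $S$ by the hyperplane $H_{d/2}$, the same counts of $|S|$ and $|\partial S|$ via Lemmas~\ref{CP.sec.2.lem.2} and~\ref{CP.sec.2.lem.4}, and the same asymptotic control of the central binomial coefficient $\binom{(d+1)/2}{(d+1)/4}=o\bigl(2^{(d+1)/2}\bigr)$ to reduce the claim to $d+1<\sqrt{2}(1-\varepsilon)d$ for large $d$. If anything, you are slightly more careful than the paper in accounting for both contributions to $|V|$ (the paper's equation for $|S|$ omits the $\{0,1\}^d\cap L$ vertices, which is harmless only because the inequality is then used in the right direction) and in invoking the standard form $\binom{2m}{m}\leq 4^m/\sqrt{\pi m}$ of the central binomial bound.
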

\begin{proof}
Assume that $d$ is odd and denote by $S$ the subset of the vertices of $\Xi$ whose coordinates sum to at most $(d-1)/2$. By symmetry, $S$ contains exactly half of the vertices of $\Xi$ and, by Lemma \ref{CP.sec.2.lem.2},
\begin{equation}\label{CP.sec.2.thm.1.eq.1}
|S|=\frac{1}{2}{d\choose{(d-1)/2}}\biggl[2^{(d+1)/2}-{{(d+1)/2}\choose{(d+1)/4}}\biggr]\mbox{.}
\end{equation}

It is well-known that the central binomial coefficient can be bounded as
\begin{equation}\label{CP.sec.2.thm.1.eq.2}
{(d+1)/2\choose(d+1)/4}\leq\frac{2\sqrt{2}^{d+1}}{\sqrt{\pi}(d+1)}\mbox{.}
\end{equation}

One obtains, by combining (\ref{CP.sec.2.thm.1.eq.1}) and (\ref{CP.sec.2.thm.1.eq.2}) that 
$$
|S|\geq\frac{1}{2}{d\choose{(d-1)/2}}\sqrt{2}^{d+1}\biggl(1-\frac{2}{\sqrt{\pi}(d+1)}\biggr)\mbox{.}
$$

Denote by $\partial{S}$ the subset of the edges of $\Xi$ with a vertex in $S$ and the other not. It follows from Lemma \ref{CP.sec.2.lem.4} that
$$
|\partial{S}|=\frac{d+1}{2}{d\choose{(d-1)/2}}\mbox{.}
$$

As a consequence,
\begin{equation}\label{CP.sec.2.thm.1.eq.4}
\frac{|\partial{S}|}{|S|}\leq\frac{d+1}{\displaystyle\sqrt{2}^{d+1}\biggl(1-\frac{2}{\sqrt{\pi}(d+1)}\biggr)}\mbox{.}
\end{equation}

Since, for any sufficiently large $d$,
$$
\frac{d+1}{\displaystyle\sqrt{2}\biggl(1-\frac{2}{\sqrt{\pi}(d+1)}\biggr)}
$$
is less than $d$, the result follows from (\ref{CP.sec.2.thm.1.eq.4}).
\end{proof}

Note that Theorem \ref{thm:hip} is an immediate consequence of Theorem \ref{CP.sec.2.thm.1}.

\section{Half-integral zonotopes are graphical}
\label{sec:gz}

Recall that a zonotope is a Minkowski sum of finitely-many segments from $\mathbb{R}^d$. In other words, a zonotope $Z$ is, up to translation the Minkowski sum
$$
\sum_{g\in\mathcal{G}}\mathrm{conv}\{0,g\}
$$
where $\mathcal{G}$ is a finite set of vectors from $\mathbb{R}^d$. It will be assumed without loss of generality that the vectors in $\mathcal{G}$ are pairwise non-collinear and that their first non-zero coordinate is positive. By these assumptions, $\mathcal{G}$ is unique and the vectors in $\mathcal{G}$ will be referred to as the \emph{generators} of $Z$.

All zonotopes are polytopes and, just as polytopes, they are half-integral when the coordinates of their vertices are $0$, $1/2$, or $1$. In this section, we relate half-integral zonotopes with the sub-class of \emph{graphical zonotopes}. Let us first recall that graphical zonotopes are precisely the Minkowski sums of subsets of edges of a standard simplex. Each graphical zonotope corresponds to a graph in the following sense. A graph $G$ with $d$ vertices can be embedded in the $1$-skeleton of the $(d-1)$-dimensional standard simplex $\Delta_{d-1}$ by mapping the vertices of $G$ to the vertices of $\Delta_{d-1}$. The graphical zonotope $Z(G)$ of $G$ is then the Minkowski sum of the edges of $\Delta_{d-1}$ that correspond to the edges of $G$ via this embedding. This zonotope depends on how the vertices of $G$ are mapped to those of $\Delta_{d-1}$ but, by the symmetries of $\Delta_{d-1}$, all the zonotopes resulting from different such mappings coincide up to isometry and we will call any of them the graphical zonotope of $G$. We will need the following statement.

\begin{prop}\label{sec:gz.prop.0}
Consider a half-integral zonotope $Z$ contained in $\mathbb{R}^d$. For every integer $i$ satisfying $1\leq{i}\leq{d}$, there are at most two generators of $Z$ whose $i$th coordinate is non-zero. Moreover, if there are exactly two such generators of $Z$, then the absolute value of their $i$th coordinate is $1/2$.
\end{prop}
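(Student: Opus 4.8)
The plan is to compare two expressions for the width of $Z$ along the $i$th coordinate axis. Write $g_i$ for the $i$th coordinate of a generator $g\in\mathcal{G}$, and recall that the width of a polytope in the direction $e_i$---the difference between the maximum and the minimum of the $i$th coordinate over the polytope---is additive under Minkowski sums and invariant under translation. Since $Z$ is, up to translation, the Minkowski sum $\sum_{g\in\mathcal{G}}\mathrm{conv}\{0,g\}$ and each segment $\mathrm{conv}\{0,g\}$ has width $|g_i|$ in the direction $e_i$, the width of $Z$ in that direction equals $\sum_{g\in\mathcal{G}}|g_i|$. On the other hand, the maximum and the minimum of the $i$th coordinate over $Z$ are attained at vertices of $Z$, and these coordinates lie in $\{0,1/2,1\}$ because $Z$ is half-integral. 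Hence the width of $Z$ in the direction $e_i$ is at most $1$, which gives the key inequality $\sum_{g\in\mathcal{G}}|g_i|\leq 1$.

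The second ingredient is a lower bound on each nonzero $g_i$. Here I would use the standard fact that in a zonotope every generator is realized as an edge vector: for each $g\in\mathcal{G}$ there are two adjacent vertices $u$ and $v$ of $Z$ with $g=u-v$. Consequently $g_i=u_i-v_i$ is a difference of two numbers from $\{0,1/2,1\}$, so that $g_i\in\{0,\pm 1/2,\pm 1\}$; in particular $|g_i|\geq 1/2$ whenever $g_i\neq 0$.

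Combining the two ingredients finishes the argument. If $m$ denotes the number of generators whose $i$th coordinate is nonzero, then each of them contributes at least $1/2$ to the sum $\sum_{g\in\mathcal{G}}|g_i|$, which is at most $1$; thus $m/2\leq 1$ and $m\leq 2$. Moreover, if there are exactly two such generators $g$ and $g'$, then the fact that all other generators contribute $0$ yields $|g_i|+|g'_i|\leq 1$, and combined with $|g_i|\geq 1/2$ and $|g'_i|\geq 1/2$ this forces $|g_i|=|g'_i|=1/2$, as claimed.

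The step I expect to require the most care is the claim that each generator arises as the difference of two adjacent vertices of $Z$. While this is classical for zonotopes---each generator determines a nonempty parallel class of edges, obtained by choosing a linear functional that vanishes on that generator but is generic on the others---it is the only place where I leave the purely metric reasoning, and it is essential: it is what lets me apply half-integrality coordinate by coordinate to bound each $|g_i|$ from below, rather than only controlling the global width $\sum_{g\in\mathcal{G}}|g_i|$.
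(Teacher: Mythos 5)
Your proof is correct and follows essentially the same route as the paper's: both identify $\sum_{g\in\mathcal{G}}|g_i|$ with the width of $Z$ in the direction of the $i$th coordinate (the paper phrases this as the length of the orthogonal projection of $Z$ onto the $i$th coordinate axis) and bound it by $1$ using $Z\subseteq[0,1]^d$. The only difference is that you explicitly justify, via the edge-vector realization of generators, why each nonzero $g_i$ has absolute value at least $1/2$, a fact the paper asserts without proof.
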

\begin{proof}
Consider the $1$-dimensional linear subspace $L_i$ of $\mathbb{R}^d$ spanned by the $i$th coordinate. The orthogonal projection of $Z$ on $L_i$ is a line segment of length
$$
\sum_{g\in\mathcal{G}}|g_i|\mbox{.}
$$

As $Z$ is contained in $[0,1]^d$, this length is at most $1$. Since the absolute values of the non-zeros coordinates of the generators of $Z$ are $1/2$ or $1$, it follows that at most two generators of $Z$ can have a non-zero $i$th coordinate and when there are two such generators, the absolute value of their $i$th coordinate is $1/2$.
\end{proof}

Recall that a set of vectors from $\mathbb{R}^d$ is \emph{minimally} linearly dependent when it is linearly dependent but all its proper subsets are linearly independent. Let us state properties of half-integral zonotopes that follow from Proposition \ref{sec:gz.prop.0}.

\begin{lem}\label{sec:gz.lem.0}
Consider a half-integral zonotope $Z$. If $\mathcal{A}$ is a minimally linearly dependent subset of the generators of $Z$ then there exists a family $(\lambda_g)_{g\in\mathcal{A}}$ of numbers, each of them equal to $1$ or to $-1$ satisfying
\begin{equation}\label{sec:gz.lem.0.eq.0}
\sum_{g\in\mathcal{A}}\lambda_gg=0\mbox{.}
\end{equation}
\end{lem}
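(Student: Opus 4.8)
The plan is to start from the defining relation of a minimally dependent set and show that half-integrality forces all of its coefficients to have the same magnitude. Since $\mathcal{A}$ is minimally linearly dependent, there is a nontrivial linear relation $\sum_{g\in\mathcal{A}}\mu_gg=0$, and this relation is unique up to a global scalar. Moreover every $\mu_g$ must be nonzero: if some $\mu_g$ vanished, the generators with nonzero coefficient would form a proper linearly dependent subset of $\mathcal{A}$, contradicting minimality.

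Next I would read this relation off one coordinate at a time. Fix an index $i$ and consider the generators of $\mathcal{A}$ whose $i$th coordinate is nonzero. By Proposition~\ref{sec:gz.prop.0} there are at most two of them, and I claim there cannot be exactly one. Indeed, if $g$ were the only generator with $g_i\neq0$, the $i$th coordinate of the relation would read $\mu_gg_i=0$, forcing $\mu_g=0$, which we have just excluded. Hence, for each $i$, either no generator of $\mathcal{A}$ is nonzero in coordinate $i$, or exactly two are. In the latter case Proposition~\ref{sec:gz.prop.0} gives that both nonzero entries equal $\pm1/2$, so the coordinate-$i$ relation $\mu_gg_i+\mu_hh_i=0$ with $g_i,h_i\in\{-1/2,1/2\}$ forces $|\mu_g|=|\mu_h|$.

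I would then encode this into a graph $\Gamma$ on the vertex set $\mathcal{A}$, joining two generators by an edge whenever they share a coordinate in which both are nonzero. The previous paragraph shows that $|\mu_g|=|\mu_h|$ whenever $g$ and $h$ are adjacent in $\Gamma$, so $|\mu_g|$ is constant on each connected component of $\Gamma$. The key step is to argue that $\Gamma$ is connected. If it were not, $\mathcal{A}$ would split into two nonempty sets $\mathcal{A}_1$ and $\mathcal{A}_2$ whose coordinate supports are disjoint; restricting the relation $\sum_{g\in\mathcal{A}}\mu_gg=0$ to the coordinates supporting $\mathcal{A}_1$ would then yield $\sum_{g\in\mathcal{A}_1}\mu_gg=0$, a nontrivial relation among a proper subset of $\mathcal{A}$ and again a contradiction with minimality.

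Finally, with $\Gamma$ connected all the $|\mu_g|$ share a common value $c>0$; setting $\lambda_g=\mu_g/c$ yields the desired coefficients in $\{-1,1\}$ satisfying $\sum_{g\in\mathcal{A}}\lambda_gg=0$. I expect the connectivity argument to be the only genuinely delicate step, since everything else is a direct bookkeeping consequence of Proposition~\ref{sec:gz.prop.0}; the subtlety there is checking that disjoint supports really do decouple the relation, which relies on the fact that each generator vanishes on every coordinate outside its own support, so that $\sum_{g\in\mathcal{A}_1}\mu_gg$ is zero not only on the support of $\mathcal{A}_1$ but on all remaining coordinates as well.
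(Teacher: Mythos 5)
Your proof is correct and rests on the same two pillars as the paper's: Proposition~\ref{sec:gz.prop.0} forces $|\mu_g|=|\mu_h|$ whenever two generators of $\mathcal{A}$ share a nonzero coordinate, and minimality of the dependence propagates this equality to all of $\mathcal{A}$. The only difference is organizational --- you phrase the propagation as connectivity of a support graph (with the nice extra observation that each coordinate meets either zero or exactly two generators of $\mathcal{A}$), whereas the paper runs a maximal-subset argument showing that the spans of $\mathcal{B}$ and $\mathcal{A}\mathord{\setminus}\mathcal{B}$ cannot be orthogonal; both amount to the same decoupling contradiction.
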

\begin{proof}
If $\mathcal{A}$ is a minimally linearly dependent subset of the generators of $Z$ then there exists a family $(\lambda_g)_{g\in\mathcal{A}}$ of numbers such that (\ref{sec:gz.lem.0.eq.0}) holds. It suffices to show that the absolute value of these numbers are all equal. Let $\mathcal{B}$ be a non-empty subset of $\mathcal{A}$ such that any two distinct vectors of $\mathcal{B}$ have identical or opposite coefficients in the left-hand side of (\ref{sec:gz.lem.0.eq.0}). Assume that $\mathcal{B}$ is maximal for inclusion with respect to this requirement. We will show that $\mathcal{B}$ must then coincide with $\mathcal{A}$. Let us assume, for contradiction that $\mathcal{B}$ is not equal to $\mathcal{A}$.

Observe first that the linear spans of $\mathcal{B}$ and of $\mathcal{A}\mathord{\setminus}\mathcal{B}$ cannot be orthogonal. Indeed, since $\mathcal{A}$ is minimally affinely dependent, $\mathcal{A}\mathord{\setminus}\mathcal{B}$ is affinely independent. Moreover, any vector $x$ in $\mathcal{B}$ can be written as a linear combination of $\mathcal{A}\mathord{\setminus}\{x\}$. Hence, if the linear spans of $\mathcal{B}$ and $\mathcal{A}\mathord{\setminus}\mathcal{B}$ were orthogonal, the coefficients of the vectors from $\mathcal{A}\mathord{\setminus}\mathcal{B}$ in that linear combination would be equal to $0$ and $x$ could be expressed as a linear combination of $\mathcal{B}\mathord{\setminus}\{x\}$. This would imply that $\mathcal{B}$ is affinely dependent and contradict the minimal affine dependence of $\mathcal{A}$.

Since the linear spans of $\mathcal{B}$ and of $\mathcal{A}\mathord{\setminus}\mathcal{B}$ are not orthogonal, there exists a vector $x$ in $\mathcal{A}\mathord{\setminus}\mathcal{B}$ that is not orthogonal to every vector from $\mathcal{B}$. Hence, assuming that $Z$ is contained in $\mathbb{R}^d$, there exists an integer $i$ satisfying $1\leq{i}\leq{d}$ and a vector $y$ in $\mathcal{B}$ such that $x_i$ and $y_i$ are both non-zero. By Proposition~\ref{sec:gz.prop.0}, the $i$th coordinate of the other vectors in $\mathcal{S}$ is equal to $0$ and (\ref{sec:gz.lem.0.eq.0}) implies
\begin{equation}\label{sec:gz.lem.0.eq.1}
\lambda_xx_i+\lambda_yy_i=0\mbox{.}
\end{equation}

However, by Proposition \ref{sec:gz.prop.0} again, the absolute values of $x_i$ and $y_i$ are both equal to $1/2$. Hence, it follows from (\ref{sec:gz.lem.0.eq.1}) that the absolute values of $\lambda_x$ and $\lambda_y$ coincide, which contradicts our assumption on $\mathcal{B}$.
\end{proof}

The following is another consequence of Proposition \ref{sec:gz.prop.0}.

\begin{lem}\label{sec:gz.lem.1}
Consider a half-integral zonotope $Z$ and denote by $\mathcal{G}$ its set of generators. If $\mathcal{A}$ is a minimally linearly dependent subset of $\mathcal{G}$, then the linear spans of $\mathcal{A}$ and of $\mathcal{G}\mathord{\setminus}\mathcal{A}$ are orthogonal.
\end{lem}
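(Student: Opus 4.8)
The plan is to combine the $\pm 1$ linear dependence relation provided by Lemma \ref{sec:gz.lem.0} with the coordinate quota of Proposition \ref{sec:gz.prop.0} in order to show that the generators in $\mathcal{A}$ and those in $\mathcal{G}\mathord{\setminus}\mathcal{A}$ have disjoint coordinate supports. Once this is established, orthogonality of the two linear spans follows immediately, since vectors supported on disjoint sets of coordinates are automatically orthogonal.

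Concretely, by Lemma \ref{sec:gz.lem.0} there is a family $(\lambda_g)_{g\in\mathcal{A}}$, each $\lambda_g$ equal to $1$ or $-1$, with $\sum_{g\in\mathcal{A}}\lambda_gg=0$. I would fix a coordinate index $i$ with $1\leq{i}\leq{d}$ and read this relation off in its $i$th coordinate, obtaining $\sum_{g\in\mathcal{A}}\lambda_gg_i=0$. By Proposition \ref{sec:gz.prop.0}, at most two generators of $Z$, and hence at most two generators in $\mathcal{A}$, have a non-zero $i$th coordinate. The first step is to rule out the possibility that \emph{exactly one} generator $g\in\mathcal{A}$ satisfies $g_i\neq0$: in that case the sum above would reduce to the single term $\lambda_gg_i$, which is non-zero because $\lambda_g=\pm1$, contradicting that the sum vanishes. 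Therefore, for every coordinate $i$, the number of generators of $\mathcal{A}$ with a non-zero $i$th coordinate is either $0$ or $2$.

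Next I would exploit the saturation this dichotomy produces. Suppose a coordinate $i$ is non-zero for some generator of $\mathcal{A}$; by the previous step it is then non-zero for exactly two generators of $\mathcal{A}$. Since Proposition \ref{sec:gz.prop.0} caps the number of generators of $Z$ with a non-zero $i$th coordinate at two, these two generators already account for the entire quota, so no generator in $\mathcal{G}\mathord{\setminus}\mathcal{A}$ can have a non-zero $i$th coordinate. Writing $I$ for the set of coordinate indices on which the generators of $\mathcal{A}$ are supported, this shows that every generator of $\mathcal{A}$ is supported on $I$, whereas every generator in $\mathcal{G}\mathord{\setminus}\mathcal{A}$ is supported on the complement of $I$.

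Finally, since the generators of $\mathcal{A}$ and those of $\mathcal{G}\mathord{\setminus}\mathcal{A}$ have disjoint coordinate supports, the same holds for any vector in the linear span of $\mathcal{A}$ and any vector in the linear span of $\mathcal{G}\mathord{\setminus}\mathcal{A}$; two such vectors have inner product $0$, so the two spans are orthogonal. The only genuinely delicate point is the ``exactly $0$ or $2$'' dichotomy established in the first step, as everything afterwards is a direct consequence of the coordinate quota being saturated by $\mathcal{A}$ itself.
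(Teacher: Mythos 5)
Your proposal is correct and follows essentially the same route as the paper: both use the nonvanishing coefficients of the minimal dependence relation to show that a coordinate occupied by one generator of $\mathcal{A}$ must be occupied by a second one, so that the quota of two from Proposition~\ref{sec:gz.prop.0} is saturated within $\mathcal{A}$ and the generators outside $\mathcal{A}$ must vanish there. (Note only the nonvanishing of the $\lambda_g$ is needed, not that they equal $\pm1$.)
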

\begin{proof}
Assume that $\mathcal{A}$ is a minimally linearly dependent subset of $\mathcal{G}$. Consider an integer $i$ satisfying $1\leq{i}\leq{d}$, a vector $x$ in $\mathcal{A}$ and a vector $y$ in $\mathcal{G}\mathord{\setminus}\mathcal{A}$. We will show that if $x_i$ is non-zero, then $y_i$ necessarily vanishes. Observe that the lemma immediately follows from this property. 

According to Lemma \ref{sec:gz.lem.0},
$$
\sum_{g\in\mathcal{A}}\lambda_gg=0
$$
where $(\lambda_g)_{g\in\mathcal{A}}$ is a family of non-zero coefficients. If $x_i$ is non-zero, it follows that there must be a vector $z$ in $\mathcal{A}$ distinct from $x$ such that $z_i$ is non-zero. By Proposition \ref{sec:gz.prop.0}, $x$ and $z$ must then be the only generators of $Z$ whose $i$th coordinate is non-zero. Hence, $y_i$ must be equal to $0$, as desired.
\end{proof}

In the sequel, by an \emph{affine transformation}, we refer to an affine map between two affine spaces that is also a bijection.

\begin{lem}\label{sec:gz.lem.2}
Consider a half-integral zonotope $Z$ and denote by $\mathcal{G}$ its set of generators. If $\mathcal{G}$ is minimally linearly dependent, then $Z$ is equal, up to an affine transformation, to the graphical zonotope of a cycle on $|\mathcal{G}|$ vertices.
\end{lem}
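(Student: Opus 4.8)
The plan is to reduce the statement to the single dependency provided by \lemref{sec:gz.lem.0} and then to match the generators of $Z$ with the edge vectors of a cycle by a linear map. First I would set $n=\abs{\mathcal{G}}$ and observe that, since the generators are pairwise non-collinear, a minimally linearly dependent set must contain at least three of them (two collinear vectors would be needed for a dependence of size two); thus the cycle on $n$ vertices we aim for is a genuine simple cycle $C_n$. Applying \lemref{sec:gz.lem.0} with $\mathcal{A}=\mathcal{G}$ yields signs $\lambda_g\in\{-1,1\}$ with $\sum_{g\in\mathcal{G}}\lambda_g g=0$. Because replacing a generator $g$ by $-g$ only translates the Minkowski sum $\sum_{g}\mathrm{conv}\{0,g\}$, as $\mathrm{conv}\{0,-g\}$ is a translate of $\mathrm{conv}\{0,g\}$, I may assume up to a translation of $Z$ that the generators are normalized so that $\sum_{g\in\mathcal{G}}g=0$.

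Next I would record the two linear-algebraic facts that make the matching work. Writing $\mathcal{G}=\{g_1,\dots,g_n\}$, the minimal dependence forces $g_1,\dots,g_{n-1}$ to be linearly independent while $g_n=-\sum_{i<n}g_i$; in particular $\mathcal{G}$ has rank $n-1$. On the other side, I would embed $C_n$ in $\Delta_{n-1}\subset\mathbb{R}^n$ with vertex $i$ at $e_i$ and edges joining consecutive vertices cyclically, so that its edge vectors are $f_i=e_i-e_{i+1}$ (indices modulo $n$). These satisfy the telescoping identity $\sum_i f_i=0$, and $f_1,\dots,f_{n-1}$ are linearly independent and span the hyperplane $\{x:\sum_i x_i=0\}$, which is exactly the linear span of the generators of $Z(C_n)$.

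The core step is then to define the linear map $T$ from the span of $\mathcal{G}$ onto $\{x:\sum_i x_i=0\}$ by $T(g_i)=f_i$ for $1\le i\le n-1$. Since $T$ sends a basis to a basis it is a linear bijection, and the identities $g_n=-\sum_{i<n}g_i$ and $f_n=-\sum_{i<n}f_i$ force $T(g_n)=f_n$ as well. Hence $T$ carries the whole set of generators of $Z$ bijectively onto the edge vectors of $C_n$, so by linearity $T\bigl(\sum_i\mathrm{conv}\{0,g_i\}\bigr)=\sum_i\mathrm{conv}\{0,f_i\}=Z(C_n)$. Composing $T$ with the translation from the first paragraph produces the desired affine bijection from $Z$ onto $Z(C_n)$.

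The substance of the argument is entirely contained in \lemref{sec:gz.lem.0}: half-integrality is what guarantees that the unique (up to scaling) dependence of the circuit $\mathcal{G}$ has all its coefficients of equal absolute value, and this is precisely the feature that distinguishes the cycle among all rank-$(n-1)$ circuits. Once this is in hand no combinatorial obstacle remains, and the only points requiring care are the sign normalization, which must be justified as a mere translation rather than an arbitrary rescaling (rescaling a generator would genuinely change the zonotope, whereas negating it does not), and the observation that, under the definition of affine transformation adopted just above, $T$ is permitted to change the ambient dimension, embedding the $(n-1)$-dimensional zonotope $Z$ into the hyperplane $\{x:\sum_i x_i=0\}$ of $\mathbb{R}^n$.
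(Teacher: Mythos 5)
Your proof is correct and follows essentially the same route as the paper's: both invoke Lemma~\ref{sec:gz.lem.0} to get a $\pm1$ dependence, absorb the signs into the generators via a translation of the Minkowski sum, and then send the resulting basis $g_1,\dots,g_{n-1}$ to the telescoping edge vectors of the cycle so that the remaining dependent generator is automatically matched by linearity. Your added remarks (that $n\geq 3$, and that the affine bijection is between the two spans rather than the ambient spaces) are consistent with the paper's conventions.
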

\begin{proof}
Denote by $e^1$ to $e^{|\mathcal{G}|}$ the vectors of the canonical basis of $\mathbb{R}^{|\mathcal{G}|}$. Consider the set $\mathcal{G}'$ whose elements are the vector $e^1-e^{|\mathcal{G}|}$ and the vectors $e^i-e^{i+1}$ when $1\leq{i}<|\mathcal{G}|$. Observe that $\mathcal{G}'$ is the set of the generators of the graphical zonotope of a cycle on $|\mathcal{G}|$ vertices, which we will denote by $Z'$.

Now assume that $\mathcal{G}$ is minimally linearly dependent and denote by $x$ a vector in $\mathcal{G}$. According to Lemma \ref{sec:gz.lem.0} there exists a family $(\lambda_g)_{g\in\mathcal{G}\mathord{\setminus}\{x\}}$ of numbers, each of whose is equal to $1$ or to $-1$ that satisfy
\begin{equation}\label{sec:gz.lem.2.eq.1}
\sum_{\substack{g\in\mathcal{G}\\g\neq{x}}}\lambda_gg=x\mbox{.}
\end{equation}

Let us consider the set
$$
\mathcal{A}=\bigl\{\lambda_gg:g\in\mathcal{G}\mathord{\setminus}\{x\}\bigr\}\cup\bigl\{x\bigr\}\mbox{.}
$$

Observe that $Z$ coincides, up to an affine transformation with
$$
\sum_{g\in\mathcal{A}}\mathrm{conv}\{0,g\}\mbox{.}
$$

As a consequence, it suffices to show that there exists a linear bijection from the linear span of $\mathcal{A}$ to that of $\mathcal{G}'$ that sends $\mathcal{A}$ to $\mathcal{G}'$.

Observe that (\ref{sec:gz.lem.2.eq.1}) can be rewritten into
\begin{equation}\label{sec:gz.lem.2.eq.2}
\sum_{\substack{g\in\mathcal{A}\\g\neq{x}}}g=x\mbox{.}
\end{equation}

Since $\mathcal{G}$ is minimally linearly dependent, $\mathcal{G}\mathord{\setminus}\{x\}$ is linearly independent. Since $\mathcal{A}$ is obtained from $\mathcal{G}$ by negating some vectors and keeping the other vectors, $\mathcal{A}\mathord{\setminus}\{x\}$ is linearly independent as well. Hence, by (\ref{sec:gz.lem.2.eq.2}), $\mathcal{A}\mathord{\setminus}\{x\}$ forms a basis of the linear span of $\mathcal{A}$. Likewise, observe that the vectors $e^1-e^2$ to $e^{|\mathcal{G}|-1}-e^{|\mathcal{G}|}$ collectively form a basis of the linear span of $\mathcal{G}'$. Consider a linear bijection $\psi$ from the linear span of $\mathcal{A}$ to that of $\mathcal{G}'$ that sends $\mathcal{A}$ to $\mathcal{G}'$. By linearity, it follows from (\ref{sec:gz.lem.2.eq.2}) that $\psi$ sends $x$ to $e^1-e^{|\mathcal{G}|}$. Hence, $\psi$ is a linear bijection from the linear span of $\mathcal{A}$ to that of $\mathcal{G}'$ that sends $\mathcal{A}$ to $\mathcal{G}'$, as desired. 
\end{proof}

\begin{rem}\label{sec:gz.rem.1}
By the argument in the proof of Lemma~\ref{sec:gz.lem.2}, the graphical zonotope of a cycle on $d$ vertices is, up to an affine transformation, the Minkowski sum of a $(d-1)$-dimensional hypercube with any of its diagonals.
\end{rem}

We are now ready to prove the main result of the section. In order to do that, we will use the straightforward property that the cartesian product of the graphical zonotopes of two graphs is, up to an affine transformation, the graphical zonotope of the disjoint union of these graphs.
\begin{thm}\label{thm:hizg}
Half-integral zonotopes are, up to an affine transformation, graphical zonotopes of graphs of degree at most two.
\end{thm}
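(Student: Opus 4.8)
The plan is to induct on the number $|\mathcal{G}|$ of generators of the half-integral zonotope $Z$, peeling off one minimally dependent subset at a time. For the base case, suppose $\mathcal{G}$ is linearly independent. Then $Z=\sum_{g\in\mathcal{G}}\mathrm{conv}\{0,g\}$ is a parallelepiped, and the linear map sending the $i$th canonical basis vector to the $i$th generator is a bijection from the unit cube $[0,1]^{|\mathcal{G}|}$ onto $Z$. Since the cube $[0,1]^{|\mathcal{G}|}$ is, by the product fact recalled before the statement, the graphical zonotope of a disjoint union of $|\mathcal{G}|$ single edges, $Z$ is, up to an affine transformation, the graphical zonotope of a graph all of whose vertices have degree one, and in particular at most two.

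For the inductive step, assume $\mathcal{G}$ is linearly dependent and pick a minimally linearly dependent subset $\mathcal{A}$ of $\mathcal{G}$. By \lemref{sec:gz.lem.1}, the linear spans of $\mathcal{A}$ and of $\mathcal{G}\mathord{\setminus}\mathcal{A}$ are orthogonal, hence in particular linearly independent. I would then write $Z$ as the Minkowski sum $Z_{\mathcal{A}}+Z'$, where $Z_{\mathcal{A}}$ and $Z'$ are generated by $\mathcal{A}$ and by $\mathcal{G}\mathord{\setminus}\mathcal{A}$, respectively. Because $0$ belongs to each segment $\mathrm{conv}\{0,g\}$, both $Z_{\mathcal{A}}$ and $Z'$ are contained in $Z$, and therefore in $[0,1]^d$; consequently each is again a half-integral zonotope, with $\mathcal{A}$ and $\mathcal{G}\mathord{\setminus}\mathcal{A}$ as their respective generating sets. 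Now $Z'$ has strictly fewer generators than $Z$, so by the induction hypothesis it is, up to an affine transformation, the graphical zonotope of a graph $G'$ of degree at most two, while $\mathcal{A}$ is minimally linearly dependent and \lemref{sec:gz.lem.2} identifies $Z_{\mathcal{A}}$, up to an affine transformation, with the graphical zonotope of a cycle $C$.

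It remains to combine the two factors. Since the spans of $\mathcal{A}$ and of $\mathcal{G}\mathord{\setminus}\mathcal{A}$ are linearly independent, the direct-sum isomorphism realizes the Minkowski sum $Z_{\mathcal{A}}+Z'$ as, up to a linear bijection, the cartesian product $Z_{\mathcal{A}}\times Z'$. Applying the two affine transformations factorwise then shows that $Z$ is, up to an affine transformation, the cartesian product of the graphical zonotopes of $C$ and of $G'$. By the product fact recalled before the statement, this cartesian product is the graphical zonotope of the disjoint union $C\sqcup G'$. As the disjoint union of a cycle with a graph of degree at most two is again a graph of degree at most two, the induction is complete.

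The step I expect to require the most care is the passage from the orthogonal splitting of \lemref{sec:gz.lem.1} to a genuine cartesian product of polytopes: one must check that the two Minkowski summands live in complementary subspaces, so that their sum is, up to a linear map, their product, and that the summand generated by $\mathcal{G}\mathord{\setminus}\mathcal{A}$ is honestly a half-integral zonotope with the expected generators, so that the induction hypothesis applies. The observation that $0$ lies in every generating segment, which forces both summands to sit inside $[0,1]^d$, is what makes this routine. Everything else reduces to the already-established \lemref{sec:gz.lem.1} and \lemref{sec:gz.lem.2}, together with the elementary fact that every linearly dependent set contains a minimally dependent subset.
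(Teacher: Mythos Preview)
Your argument is essentially the paper's own proof: induction on $|\mathcal{G}|$, handling the linearly independent case as a cube, and in the dependent case splitting off a minimally dependent subset $\mathcal{A}$, invoking \lemref{sec:gz.lem.2} on it, \lemref{sec:gz.lem.1} for orthogonality, and the cartesian-product fact to recombine. The only cosmetic difference is that the paper treats the case $\mathcal{A}=\mathcal{G}$ separately and applies induction to both pieces, whereas you apply \lemref{sec:gz.lem.2} directly to $Z_{\mathcal{A}}$ and induction only to $Z'$; your variant is slightly cleaner.

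One point of justification is not quite right as written: the implication ``$Z_{\mathcal{A}},Z'\subseteq[0,1]^d$, hence each is half-integral'' is false in general (a zonotope inside the cube need not have half-integral vertices). What actually makes it work is the content of \lemref{sec:gz.lem.1}: its proof shows that the coordinate supports of $\mathcal{A}$ and $\mathcal{G}\setminus\mathcal{A}$ are disjoint, so every vertex of $Z$ decomposes coordinatewise as a vertex of $Z_{\mathcal{A}}$ plus a vertex of $Z'$, forcing both summands to have half-integral vertices. You already flag this step as the delicate one; just be sure to cite the disjoint-support consequence of \lemref{sec:gz.lem.1} rather than mere containment in the cube.
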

\begin{proof}
Consider a half-integral zonotope $Z$ and denote by $\mathcal{G}$ its set of generators. The proof is by induction on $|\mathcal{G}|$. If $\mathcal{G}$ is linearly independent, then it $Z$ is equal, up to an affine transformation to a hypercube. As a hypercube is equal, up to an affine transformation to the graphical zonotope of a path, the result is immediate in this case. This provides the base case of our induction since $\mathcal{G}$ is linearly independent when it is a singleton.

Now assume that $\mathcal{G}$ is linearly dependent (and therefore, non-empty). In that case, $\mathcal{G}$ admits a minimally linearly dependent subset $\mathcal{A}$. If $\mathcal{A}$ is equal to $\mathcal{G}$, then the result is obtained from Lemma \ref{sec:gz.lem.2} and we therefore assume that $\mathcal{A}$ is a proper subset of $\mathcal{G}$. In that case, $Z$ is the Minkowski sum of two half-integral zonotopes, whose sets of generators are $\mathcal{A}$ and $\mathcal{G}\mathord{\setminus}\mathcal{A}$. However, by Lemma \ref{sec:gz.lem.1}, the linear spans of these two zonotopes are orthogonal and as a consequence, $Z$ is equal, up to an affine transformation to the cartesian product of these zonotopes. By induction, these two zonotopes coincide, up to an affine transformation, with the graphical zonotopes of graphs of degree at most two. Hence, the theorem follows from the property that the cartesian product of the graphical zonotopes of two graphs is, up to an affine transformation, the graphical zonotope of the disjoint union of these graphs.
\end{proof}

Note that Theorem \ref{thm:grz} immediately follows from Theorem \ref{thm:hizg}.

\begin{rem}
It is noteworthy that Theorem \ref{thm:hizg} does not carry over to the case of $1/k$-integral zonotopes when $k$ is greater than $2$. Indeed there is a $1/3$-integral octagonal zonogon. However a graphical zonotope cannot have octagonal faces because it is a Minkowski sum of edges of a simplex.
\end{rem}

Recall that the graphs of degree at most two are precisely the disjoint unions of paths and cycles. Hence, Theorem~\ref{thm:hizg} can be rephrased as follows.
\begin{cor}\label{cor:hiz}
Half-integral zonotopes are, up to an affine transformation, cartesian products of hypercubes and graphical zonotopes of cycles.
\end{cor}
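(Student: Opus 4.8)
The plan is to show that the corollary is simply a restatement of Theorem~\ref{thm:hizg} obtained by classifying the graphical zonotopes appearing there. The key input is the fact, recalled just before the corollary, that a graph has all vertices of degree at most two if and only if it is a disjoint union of paths and cycles. Thus I would start from a half-integral zonotope $Z$ and apply Theorem~\ref{thm:hizg} to write it, up to an affine transformation, as the graphical zonotope $Z(G)$ of some graph $G$ whose vertices all have degree at most two.

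Next I would decompose $G$ into its connected components, each of which is either a path or a cycle. The essential structural observation is that, up to an affine transformation, the graphical zonotope of a disjoint union of graphs is the cartesian product of their individual graphical zonotopes; this is exactly the property invoked in the proof of Theorem~\ref{thm:hizg}. Applying it to the components of $G$, I would express $Z(G)$ as a cartesian product of the graphical zonotopes of the path-components and the cycle-components of $G$.

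It then remains to identify the two kinds of factors. For a path component, its graphical zonotope is, up to an affine transformation, a hypercube: indeed the generators of the graphical zonotope of a path on $n$ vertices are linearly independent (there is no relation among the edges of a tree), so the zonotope is a parallelepiped and hence affinely a hypercube. This is precisely the base case already handled in the proof of Theorem~\ref{thm:hizg}. For a cycle component, the graphical zonotope is, by definition, a graphical zonotope of a cycle, and no further transformation is needed. Assembling these identifications, the cartesian product decomposition of $Z(G)$ becomes a cartesian product of hypercubes and graphical zonotopes of cycles, which is the desired conclusion.

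The argument is essentially routine given the machinery already in place, so I do not anticipate a genuine obstacle; the only point requiring mild care is the bookkeeping needed to realize the cartesian product of all the individual factors as a single affine image of $Z$, which follows by iterating the disjoint-union property across the components of $G$. One subtlety worth noting is that a hypercube is itself the graphical zonotope of a path, so the distinction between ``hypercube'' and ``graphical zonotope of a cycle'' in the statement is merely a convenient way of naming the two irreducible factor types; no case is lost or double-counted.
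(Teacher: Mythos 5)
Your proposal is correct and follows exactly the route the paper takes: it reads Corollary~\ref{cor:hiz} as a rephrasing of Theorem~\ref{thm:hizg} via the fact that graphs of maximum degree two are disjoint unions of paths and cycles, combined with the disjoint-union/cartesian-product property and the identification of graphical zonotopes of paths with hypercubes. You simply spell out details the paper leaves implicit.
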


\section{The expansion of half-integral zonotopes}\label{sec:pcp}

In this section, we derive a lower bound on the expansion of half-integral zonotopes as a consequence of Theorem \ref{thm:hizg}. In order to do so, we will use the method described in~\cite{Sinclair92} (see also \cite[Section 4.1]{Kaibel2004}). Let us first briefly recall how this method allows to bound the expansion of a graph $G$ with $n$ vertices.

Consider the oriented graph $H$ with the same vertices than $G$ obtained by replacing each edge $e$ of $G$ by two arcs with the same vertices than $e$ and opposite orientations. For any ordered pair $(s,t)$ of vertices of $H$, we consider a unit flow from $s$ to $t$ along oriented paths in $H$. Several such flows may pass through an arc of $H$ and for each such arc $a$, we consider the sum $\phi(a)$ of the flows passing through $a$. Denote by $\phi_{\max}$ the maximum value of $\phi(a)$ when $a$ ranges over the arcs of $H$. By construction, the total quantity flowing out of a subset $S$ of vertices of $H$ is at least $|S|(n-|S|)$ and cannot be greater than $|\partial S|\phi_{\max}$. Hence, under the assumption that $|S|$ is at most $n/2$,
$$
\frac{|\partial S|}{|S|}\geq\frac{n}{2\phi_{\max}}
$$

Let us refer to the minimum value of $\phi_{\max}/n$ over all possible such flows as the \emph{congestion} of $G$. The above remark can be rephrased as follows.

\begin{lem}\label{lem:congestion}
A graph of congestion at most $\rho$ has expansion at least $1/(2\rho)$. 
\end{lem}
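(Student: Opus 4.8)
The plan is to translate the flow-based congestion bound directly into an expansion bound, following the standard argument of Sinclair. The key identity to establish is the two-sided estimate on the total flow crossing a cut defined by a vertex subset $S$. First I would fix a vertex subset $S$ with $|S|\leq n/2$ and consider the collection of all ordered pairs $(s,t)$ with $s\in S$ and $t\in V\mathord{\setminus}S$. For each such pair the prescribed flow carries a total of one unit from $s$ to $t$, so by conservation exactly one net unit of flow must cross the cut $\partial S$ for every such pair. Since there are $|S|(n-|S|)$ such pairs, the aggregate flow crossing $\partial S$ is at least $|S|(n-|S|)$.

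Next I would bound this same aggregate flow from above. Every crossing unit of flow traverses some arc $a$ whose two endpoints lie on opposite sides of the cut, and the total flow through any single arc is at most $\phi_{\max}$ by the definition of the congestion witness. There are exactly $|\partial S|$ edges of $G$ in the cut, each giving rise to two arcs in $H$; crucially, for any given ordered pair the flow crosses the cut in the $S$-to-$(V\mathord{\setminus}S)$ direction, so the relevant arcs are those oriented outward, of which there are $|\partial S|$. Hence the aggregate outgoing flow is at most $|\partial S|\,\phi_{\max}$. Combining the two bounds yields
\begin{equation*}
|S|(n-|S|)\leq|\partial S|\,\phi_{\max}\mbox{,}
\end{equation*}
and since $|S|\leq n/2$ forces $n-|S|\geq n/2$, we obtain $|\partial S|/|S|\geq (n-|S|)/\phi_{\max}\geq n/(2\phi_{\max})$.

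Finally I would pass to the optimal flow and to the definition of expansion. Taking the infimum over all valid unit flows replaces $\phi_{\max}$ by $n\rho$, where $\rho$ is the congestion, so that for every $S$ with $|S|\leq n/2$ we have $|\partial S|/|S|\geq n/(2n\rho)=1/(2\rho)$. Since the Cheeger constant in (\ref{sec:intro.eq.1}) is the minimum of $|\partial S|/\min\{|S|,|V\mathord{\setminus}S|\}$ and this minimum is always attained at some $S$ with $\min\{|S|,|V\mathord{\setminus}S|\}=|S|\leq n/2$, the bound $h(G)\geq 1/(2\rho)$ follows at once.

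The main obstacle, though it is conceptual rather than computational, is the careful bookkeeping in the upper bound: one must be sure that the outgoing flow across the cut is counted against the outward-oriented arcs only, so that the factor counting arcs is $|\partial S|$ rather than $2|\partial S|$. Getting this orientation accounting right is exactly what produces the clean constant $1/(2\rho)$; a sloppy treatment would weaken the bound by a factor of two. Everything else is the routine flow-conservation argument already sketched in the paragraph preceding the statement.
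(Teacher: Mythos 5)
Your proposal is correct and follows essentially the same route as the paper, which simply invokes the flow-counting argument sketched in the paragraph preceding the lemma: the gross flow leaving $S$ is at least $|S|(n-|S|)$ and at most $|\partial S|\phi_{\max}$, giving $|\partial S|/|S|\geq n/(2\phi_{\max})$ when $|S|\leq n/2$. Your extra care about counting only the outward-oriented arcs (so the factor is $|\partial S|$ and not $2|\partial S|$) is exactly the right bookkeeping and matches the paper's bound.
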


Let us now turn our attention to the case of half-integral zonotopes. It is well-known that hypercubes have expansion exactly $1$~\cite{Harper1964,Lindsey1964,Harper1966,Hart1976}. We give an elementary proof of the lower bound via Lemma \ref{lem:congestion} and the notion of congestion that will be useful later for the case of the graphical zonotopes of cycles.

\begin{thm}\label{thm:cube}
The graph of $[0,1]^d$ has congestion at most $1/2$.
\end{thm}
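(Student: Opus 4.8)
The plan is to route, for each ordered pair $(s,t)$ of distinct vertices, the entire unit flow along a single \emph{canonical path}, and then to show that every arc carries exactly the same amount of flow, namely $2^{d-1}$. Since $[0,1]^d$ has $n=2^d$ vertices, this immediately gives $\phi_{\max}/n=1/2$, which is the desired bound.

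First I would identify the vertices of $[0,1]^d$ with the points of $\{0,1\}^d$, two of them being adjacent exactly when they differ in a single coordinate. For an ordered pair $(s,t)$ of distinct vertices, I take the canonical path from $s$ to $t$ to be the bit-fixing path: process the coordinates in increasing order of their index and, for each coordinate in which $s$ and $t$ differ, flip it. With this rule, just before coordinate $i$ is flipped (which happens precisely when $s_i\neq{t_i}$), the current vertex $u$ satisfies $u_j=t_j$ for all $j<i$ and $u_j=s_j$ for all $j\geq{i}$. This describes a well-defined path in the graph of $[0,1]^d$ from $s$ to $t$, and I route the unit flow from $s$ to $t$ entirely along it.

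The crux is to count, for a fixed arc $a$, how many ordered pairs use $a$. Say $a$ flips coordinate $i$; let $u$ be its tail and $u'$ its head, so that $u$ and $u'$ differ precisely in coordinate $i$, with $u'_i=1-u_i$. By the description above, the pair $(s,t)$ routes through $a$ if and only if $s_j=u_j$ for every $j\geq{i}$ and $t_j=u_j$ for every $j<i$, together with $t_i=u'_i=1-u_i$. In other words, the arc $a$ pins down the coordinates $s_i,\ldots,s_d$ of $s$ and the coordinates $t_1,\ldots,t_i$ of $t$, while the remaining $i-1$ coordinates of $s$ and $d-i$ coordinates of $t$ are free and may be chosen arbitrarily and independently. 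Note that every pair so obtained satisfies $s_i=u_i\neq{u'_i}=t_i$, hence $s\neq{t}$, so no degenerate pairs are counted. The total flow through $a$ is therefore
$$
\phi(a)=2^{i-1}\cdot2^{d-i}=2^{d-1}\mbox{.}
$$

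Since this value is independent of the arc $a$, we obtain $\phi_{\max}=2^{d-1}=n/2$, so the congestion of the chosen flow is $\phi_{\max}/n=1/2$, and the congestion of the graph of $[0,1]^d$ is at most $1/2$. The only delicate point is the bookkeeping in the counting step: one must verify that the arc determines exactly the ``high'' coordinates $s_i,\ldots,s_d$ of $s$ and the ``low'' coordinates $t_1,\ldots,t_i$ of $t$, and that the two resulting blocks of free coordinates are disjoint and independent, so that their contributions multiply to $2^{d-1}$ regardless of both the coordinate $i$ and the particular tail $u$.
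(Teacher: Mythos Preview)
Your proof is correct and follows essentially the same approach as the paper: route each ordered pair along the canonical bit-fixing path and count that every arc flipping coordinate $i$ carries exactly $2^{i-1}\cdot 2^{d-i}=2^{d-1}$ units of flow, giving congestion $2^{d-1}/2^d=1/2$. Your bookkeeping of which coordinates of $s$ and $t$ are pinned down by the arc is in fact slightly more precise than the paper's phrasing, but the argument is the same.
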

\begin{proof}
For any ordered pair $(s,t)$ of vertices of $[0,1]^d$, we construct a flow from $s$ to $t$ in the graph of $[0,1]^d$ as follows. Let $I$ denote the subset of the indices $i$ such that $s_i$ differs from $t_i$. Consider the path from $s$ to $t$ in the graph of $[0,1]^d$ obtained by iteratively flipping $s_i$ to $t_i$ in increasing order of the indices $i$ contained in $I$. We route a unit of flow between $s$ and $t$ on this path.

Consider two adjacent vertices $u$ and $v$ of $[0,1]^d$. These vertices differ in exactly one of their coordinates, say the $i$th-coordinate. Denoting by $a$ the arc from $u$ to $v$, the value of $\phi(a)$ is the number of pairs $(s,t)$ of vertices of $[0,1]^d$ such that $s$ shares its last $d-i+1$ coordinates with $v$ and $t$ its first $i$ coordinates with $u$. The total number of such pairs is $2^{i-1}2^{d-i}$ and $\phi(a)$ is therefore equal to $2^{d-1}$. Recall that $[0,1]^d$ has $2^d$ vertices. Hence, by the definition of the congestion of a graph, the graph of $[0,1]^d$ has congestion at most $1/2$.
\end{proof}

As announced it is an immediate consequence of Lemma~\ref{lem:congestion} and Theorem~\ref{thm:cube} that the expansion of hypercubes is at least 1. Let us now turn our attention to the graphical zonotopes of cycles. In order to compute the congestion of their graphs, we will make use of the following characterization.

\begin{lem}\label{lem:cz}
Consider a cycle $C$ on $d$ vertices. The graph of $Z(C)$ is isomorphic to the subgraph induced in the graph of $[0,1]^d$ by all the vertices except for the origin of $\mathbb{R}^d$ and the vertex opposite to the origin.
\end{lem}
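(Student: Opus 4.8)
The plan is to describe the vertices and edges of $Z(C)$ combinatorially and then to exhibit an explicit bijection to the vertices of $[0,1]^d$ other than the origin and its opposite. Label the vertices of $C$ by $1,\dots,d$ so that its edges are $\{i,i+1\}$ with indices taken modulo $d$; then the generators of $Z(C)$ are the vectors $g_i=e^i-e^{i+1}$, and $Z(C)$ is the zonotope dual to the hyperplane arrangement $\mathcal{A}$ consisting of the hyperplanes $H_i=\{c:c_i=c_{i+1}\}$ in $\mathbb{R}^d/\langle(1,\dots,1)\rangle$. I would first record the standard dictionary between a zonotope and its defining arrangement: the vertices of $Z(C)$ are in bijection with the chambers of $\mathcal{A}$, and two vertices are joined by an edge exactly when the corresponding chambers lie on opposite sides of a single hyperplane $H_i$ and agree in sign on all the others.

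Next I would read this dictionary in terms of orientations of $C$. The sign pattern $\bigl(\mathrm{sign}(c_i-c_{i+1})\bigr)_{i}$ of a chamber is precisely an orientation of $C$ (orient $\{i,i+1\}$ from $i$ to $i+1$ when $c_i>c_{i+1}$), and since a generic $c$ induces a linear order on the coordinates this orientation is acyclic; conversely, every acyclic orientation arises from a topological order and is thus realized. Hence the vertices of $Z(C)$ correspond to the acyclic orientations of $C$, and by the edge criterion above two vertices are adjacent if and only if their orientations differ in the direction of exactly one edge. An orientation of the cycle $C$ is acyclic unless it is one of the two directed cycles, so exactly $2^d-2$ orientations occur, matching the $2^d-2$ vertices of $[0,1]^d$ distinct from the origin and its opposite.

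To finish, I would encode an orientation as the vector $x\in\{0,1\}^d$ with $x_i=1$ when edge $\{i,i+1\}$ is oriented from $i$ to $i+1$ and $x_i=0$ otherwise. This is a bijection from the acyclic orientations of $C$ to $\{0,1\}^d\setminus\{\mathbf 0,\mathbf 1\}$ sending the two directed cycles to the excluded vertices $\mathbf 0$ and $\mathbf 1$, and under it two orientations differ in exactly one edge precisely when the corresponding $0/1$-vectors differ in exactly one coordinate. Composed with the vertex/edge description of $Z(C)$, this yields the desired graph isomorphism.

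The main obstacle is establishing the zonotope--arrangement dictionary cleanly, in particular the edge criterion, since the arrangement $\mathcal{A}$ is not simple. The implication ``adjacent $\Rightarrow$ separated by a single hyperplane'' is part of the general theory, and for the converse I would argue directly: if two chambers are separated by only $H_i$, then the segment joining interior points of the two chambers meets $\mathcal{A}$ in a single point, which lies on $H_i$ but on none of the other hyperplanes, hence in the relative interior of a common facet of the two chambers; dually, this facet is an edge of $Z(C)$ joining the two corresponding vertices.
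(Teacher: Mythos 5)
Your proof is correct, but it follows a genuinely different route from the paper's. The paper argues geometrically: using Remark~\ref{sec:gz.rem.1}, it identifies $Z(C)$ up to an affine transformation with $[0,1]^{d-1}+(-D)$, where $D$ is a diagonal of the $(d-1)$-cube, and then exhibits an explicit linear map $\psi:\mathbb{R}^d\to\mathbb{R}^{d-1}$ (fixing $\mathbb{R}^{d-1}$ and sending $e^d$ to $-(1,\dots,1)$) under which $[0,1]^d$ projects onto this Minkowski sum and the edges of the cube not incident to the origin or its antipode map bijectively to the edges of the image. You instead invoke the zonotope--arrangement dictionary for the graphic arrangement of $C$, identify the vertices of $Z(C)$ with the acyclic orientations of the cycle (all $2^d-2$ orientations except the two directed cycles) and the edges with single-edge reorientations, and then read off the hypercube structure from the $0/1$-encoding of orientations. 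Your approach buys generality---it shows the graph of any graphical zonotope is the flip graph of acyclic orientations, and the cycle case falls out immediately---at the cost of importing the covector/chamber dictionary, including the edge criterion for a non-simple arrangement; you correctly flag this as the delicate point and your separating-segment argument for the converse direction is sound. The paper's projection argument is more elementary and self-contained, staying entirely within the explicit coordinates already set up in Section~\ref{sec:gz}, which is why it is the natural choice there.
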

\begin{proof}
Let us identify $\mathbb{R}^{d-1}$ with the subspace of $\mathbb{R}^d$ spanned by the first $d-1$ coordinates. Consider the linear map $\psi:\mathbb{R}^d\rightarrow\mathbb{R}^{d-1}$ that fixes $\mathbb{R}^{d-1}$ and sends the vector of the canonical basis of $\mathbb{R}^d$ whose last coordinate is non-zero to the point in $\mathbb{R}^{d-1}$ whose all coordinates are equal to $-1$. Denote by $D$ the diagonal of $[0,1]^{d-1}$ that is incident to the origin of $\mathbb{R}^{d-1}$. By construction,
$$
\psi([0,1]^d)=[0,1]^{d-1}+(-D)\mbox{.}
$$

Moreover, $\psi$ bijectively sends the edges of $[0,1]^d$ that are neither incident to the origin of $\mathbb{R}^d$ nor to the vertex opposite to the origin to the edges of $[0,1]^{d-1}+(-D)$. Since, according to Remark~\ref{sec:gz.rem.1}, $Z(C)$ is equal up to an affine transformation, to $[0,1]^{d-1}+(-D)$, this completes the proof.
\end{proof}

Using Lemma \ref{lem:cz}, we are now able to bound the congestion of the graph of a graphical zonotope of a cycle with sufficiently many vertices.
  
\begin{thm}\label{thm:cycle}
Consider a cycle $C$. If $C$ has at least four vertices, then the graph of $Z(C)$ has congestion at most $6/7$.
\end{thm}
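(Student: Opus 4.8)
The plan is to transport the congestion bound for the hypercube (Theorem \ref{thm:cube}) across the isomorphism of Lemma \ref{lem:cz}. By that lemma I may work directly in the graph $Q$ obtained from the graph of $[0,1]^d$ by deleting the two vertices $\mathbf{0}$ and $\mathbf{1}=(1,\dots,1)$, where $d\geq 4$ is the number of vertices of $C$; this graph has $n=2^d-2$ vertices. I would reuse the canonical flow from the proof of Theorem \ref{thm:cube}: for an ordered pair $(s,t)$ route one unit along the monotone path that flips the coordinates on which $s$ and $t$ differ, in increasing order of index. Recall that in $[0,1]^d$ every arc carries exactly $2^{d-1}$ units of this flow. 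The only thing preventing me from using it verbatim is that some canonical paths run through the deleted vertices $\mathbf{0}$ or $\mathbf{1}$.

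First I would pin down exactly when this happens and what the path looks like locally. A short computation shows that the canonical path of $(s,t)$ passes through $\mathbf{0}$ if and only if $s$ and $t$ have no common nonzero coordinate and every index carrying a $1$ in $s$ precedes every index carrying a $1$ in $t$; in that case, writing $i=\max\{\ell:s_\ell=1\}$ and $j=\min\{\ell:t_\ell=1\}$, the path traverses the subpath $e^i\to\mathbf{0}\to e^j$ with $i<j$. The symmetric statement, obtained from the central symmetry $x\mapsto\mathbf{1}-x$, describes passage through $\mathbf{1}$, and one checks that a single canonical path can meet at most one of $\mathbf{0}$ and $\mathbf{1}$. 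I would then reroute only these offending subpaths: replace $e^i\to\mathbf{0}\to e^j$ by $e^i\to e^i+e^j\to e^j$, and symmetrically near $\mathbf{1}$. Since $d\geq 4$, the detour vertices $e^i+e^j$ and $\mathbf{1}-e^i-e^j$ are genuine vertices of $Q$, so this yields a valid unit flow in $Q$.

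It remains to bound $\phi(a)$ on every arc $a$. Every arc inherits at most the $2^{d-1}$ units it carried in the cube, and rerouting only removes flow from arcs incident to $\mathbf{0}$ or $\mathbf{1}$ (which are gone) while adding flow to the detour arcs. The detour arc $e^i\to e^i+e^j$ is used precisely by the pairs with $\max\{\ell:s_\ell=1\}=i$ and $\min\{\ell:t_\ell=1\}=j$; counting the admissible $s$ and $t$ gives exactly $2^{i-1}2^{d-j}$ such pairs, which is at most $2^{d-2}$ because $i<j$. Crucially, the $\mathbf{0}$-detour arcs join vertices of weights $1$ and $2$ while the $\mathbf{1}$-detour arcs join weights $d-1$ and $d-2$, so no arc receives detour flow from both kinds; this needs a moment's care at $d=4$, where weight $2$ coincides with weight $d-2$, but there the two families still connect weight $2$ to weights $1$ and $3$ respectively and so are distinct. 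Hence $\phi(a)\leq 2^{d-1}+2^{d-2}=3\cdot 2^{d-2}$ for every arc, and the congestion is at most
$$
\frac{3\cdot 2^{d-2}}{2^d-2}=\frac{3}{4-2^{3-d}},
$$
which is decreasing in $d$ and equals $6/7$ at $d=4$.

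The main obstacle I anticipate is the bookkeeping of the last paragraph: establishing that the detour flow never accumulates on a single arc beyond one term $2^{d-2}$, and in particular ruling out interference between the $\mathbf{0}$- and $\mathbf{1}$-detours in the smallest case $d=4$ (the cycle on four vertices), which is exactly where the value $6/7$ is attained and the bound is therefore tight.
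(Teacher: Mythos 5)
Your proposal is correct and follows essentially the same route as the paper's proof: pass to the hypercube minus two antipodal vertices via Lemma~\ref{lem:cz}, reuse the canonical flow of Theorem~\ref{thm:cube}, reroute the paths through $\mathbf{0}$ and $\mathbf{1}$ along the detour $e^i\to e^i+e^j\to e^j$, bound the added flow by $2^{i-1}2^{d-j}\leq 2^{d-2}$, and check that the two families of detour arcs are disjoint for $d\geq 4$. Your explicit verification of disjointness in the borderline case $d=4$ is a correct (and slightly more careful) rendering of the paper's claim that the sets $A$ and $B$ of detour arcs do not meet.
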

\begin{proof}
Denote by $p$ the origin of $\mathbb{R}^d$ and by $q$ the vertex of $[0,1]^d$ opposite to $p$. Further denote by $d$ the number of vertices of $C$ and by $G$ the subgraph induced in the graph of $[0,1]^d$ by all the vertices other than $p$ and $q$. According to Lemma~\ref{lem:cz}, the graph of $Z(C)$ is isomorphic to $G$ and we shall estimate the congestion of the latter graph. Consider an ordered pair of vertices $(s,t)$ of $G$ and the flow built from $s$ to $t$ in the proof of Theorem~\ref{thm:cube}. If this flow avoids $p$ and $q$, then this is a flow on the arcs obtained by orienting the edges of $G$ and we will keep it as is. If however, this flow passes through either $p$ or $q$, then we need to modify it. Let us explain how we then reroute the flow.

By symmetry, we can assume that the flow from $s$ to $t$ built in the proof of Theorem~\ref{thm:cube} passes through $p$. Denote by $e^1$ to $e^d$ the vectors of the canonical basis of $\mathbb{R}^d$. With this notation, the considered flow from $s$ to $t$ uses the arcs from $e^i$ to $p$ and from $p$ to $e^j$ for some indices $i$ and $j$. It will be useful to keep in mind that, by the way this flow was built in the proof of Theorem~\ref{thm:cube}, $j$ must be greater than $i$. We reroute this flow on the arcs from $e^i$ to $e^i+e^j$ and from $e^i+e^j$ to $e^j$. We proceed similarly for the flows through $q$.

In the original situation from Theorem~\ref{thm:cube}, the flow on all arcs was $2^{d-1}$. Since we have only considered the flows that do not start or end at $p$ or $q$, the flow on an arc is only possibly greater than $2^{d-1}$ because of the above rerouting. Let us estimate by how much the flow has increased on the arcs from $e^i$ to $e^i+e^j$ and from $e^i+e^j$ to $e^j$ where $j$ is greater than $i$. By an argument similar to the one in the proof of Theorem~\ref{thm:cube}, the number of flows that were going through the arcs from $e^i$ to $p$ and from $p$ to $e^j$ in the original construction is exactly $2^{i-1}2^{d-j}$. Since $j$ is greater than $i$, there are at most $2^{d-2}$ such flows. In particular, the flow on the arcs from $e^i$ to $e^i+e^j$ and from $e^i+e^j$ to $e^j$ has increased by at most $2^{d-2}$ and is therefore at most $2^{d-1}+2^{d-2}$. 

Rerouting the flows through $p$ has increased the flow on a set $A$ of arcs incident to a vertex with a unique non-zero coordinate. By symmetry, rerouting the flows through $q$ has increased the flow on a set $B$ of arcs incident to a vertex with a unique coordinate equal to $0$. Under the assumption that $d$ is at least $4$, $A$ and $B$ are disjoint, and the total flow on any arc is therefore at most $2^{d-1}+2^{d-2}$. Since $G$ has $2^d-2$ vertices, the congestion of $G$ is therefore at most
$$
\frac{2^{d-1}+2^{d-2}}{2^d-2}=\frac{3}{4-2^{3-d}}
$$
which, in turn is at most $6/7$ because $d$ is at least $4$.
\end{proof}
  
It follows from Lemma \ref{lem:congestion} and Theorem~\ref{thm:cycle} that the graphical zonotope of a cycle on at least four vertices has expansion at least $7/12$. The graph of the graphical zonotope of a cycle on three vertices is a cycle on six vertices.

\begin{lem}\label{lem:hex}
The congestion of a cycle on six vertices is at most $3/4$. 
\end{lem}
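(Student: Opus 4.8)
The plan is to exhibit a single symmetric flow and read off its maximal arc-value. The idea is to route, for each ordered pair $(s,t)$ of vertices of the cycle, a unit flow along a geodesic, splitting the unit equally between the two geodesics exactly when $s$ and $t$ are antipodal, and then to invoke symmetry.

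First I would label the six vertices $0,\dots,5$ in cyclic order, so that the twelve arcs consist of six ``forward'' arcs $i\to i+1$ and six ``backward'' arcs $i+1\to i$, with indices read modulo~$6$. For a pair at distance $1$ or $2$, the whole unit travels along the unique shortest path; for an antipodal pair, at distance $3$, I send $\tfrac12$ along each of the two paths of length~$3$.

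The key point is that this routing is invariant under the dihedral symmetry group of the cycle, which acts transitively on the twelve arcs. Hence $\phi(a)$ takes the same value on every arc $a$, and that common value equals the total flow summed over all arcs, divided by $12$. That total equals the sum, over all ordered pairs, of the number of arcs on the paths carrying its unit, where an antipodal pair contributes $3$ regardless of how its flow is split. Counting $12$ ordered pairs at distance~$1$, $12$ at distance~$2$, and $6$ at distance~$3$, this total is $12+24+18=54$, so each arc carries $54/12=9/2$ and $\phi_{\max}=9/2$. Since the graph has $n=6$ vertices, its congestion is at most $\phi_{\max}/n=3/4$.

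The computation is routine once the symmetry claim is in place, so the only step needing care is verifying that shortest-path routing with the even antipodal split really is invariant under the full dihedral group, and that this group is transitive on the arcs. If one prefers to bypass the symmetry argument, the same value is obtained by directly tallying the flow on a single arc, say $0\to1$: it receives $1$ unit from the pair $(0,1)$, $2$ units from the distance-$2$ pairs $(0,2)$ and $(5,1)$, and $\tfrac12$ each from the forward halves of the antipodal pairs $(0,3)$, $(5,2)$ and $(4,1)$, again totalling $9/2$.
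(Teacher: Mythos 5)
Your proposal is correct and uses essentially the same construction as the paper: unit flow on the unique geodesic for pairs at distance at most $2$, an even $1/2$--$1/2$ split for antipodal pairs, and a count showing each arc carries $3+3/2=9/2$ units, giving congestion $9/2\div 6=3/4$. The averaging-by-symmetry presentation is a cosmetic variant of the paper's direct per-arc tally, which your final paragraph also reproduces.
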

\begin{proof}
Let $G$ be a cycle on six vertices. For any ordered pair $(s,t)$ of vertices of $G$ whose distance in $G$ is at most $2$, send one unit of flow on the unique shortest path from $s$ and $t$. For pairs at distance $3$, send $1/2$ unit of flow on each of the two shortest paths. Every arc is involved in exactly three paths of length at most 2 and three paths of length 3, so the maximum flow is $3+3/2$. As a consequence, the congestion of $G$ is at most
$$
\frac{1}{6}\biggl(3+\frac{3}{2}\biggr)
$$
which is equal to $3/4$ and thus provides the announced bound.
\end{proof}

According to Corollary \ref{cor:hiz}, half-integral zonotopes are, up to an affine transformation, cartesian products of hypercubes and graphical zonotopes of cycles. Hence, provided we can bound the congestion of a cartesian product of graphs in terms of the congestion of these graphs, Theorems \ref{thm:cube} and \ref{thm:cycle} allow to bound the congestion of the graph of any half-integral zonotope. The following lemma is \cite[Lemma 22]{EppsteinFrishberg2023} but we provide a sketch of proof for completeness.

\begin{lem}\label{lem:cartesian}
Consider two simple graphs $G$ and $H$. The congestion of $G\mathord{\times}H$ cannot be greater than both the congestion of $G$ and the congestion of $H$. 
\end{lem}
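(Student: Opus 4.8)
The plan is to take flows on $G$ and on $H$ that witness their respective congestions and to combine them into a single ``product'' flow on $G\times H$, routing each ordered pair of vertices first along the first coordinate and then along the second. Concretely, write $n_G$ and $n_H$ for the numbers of vertices of $G$ and $H$, and fix flows $f^G$ and $f^H$ whose maximum arc-loads $\phi^G_{\max}$ and $\phi^H_{\max}$ realize the congestions $\rho_G=\phi^G_{\max}/n_G$ and $\rho_H=\phi^H_{\max}/n_H$ (the minimum is attained since the admissible flows form a compact set).

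For an ordered pair of vertices $(s,x)$ and $(t,y)$ of $G\times H$, with $s,t\in V(G)$ and $x,y\in V(H)$, I would route the unit flow in two phases: first use $f^G$ to send the unit from $(s,x)$ to $(t,x)$, keeping the second coordinate frozen at $x$; then use $f^H$ to send it from $(t,x)$ to $(t,y)$, keeping the first coordinate frozen at $t$. Every arc of $G\times H$ is either a \emph{$G$-arc} $((u,h),(v,h))$ arising from an arc $(u,v)$ of $G$, or an \emph{$H$-arc} $((g,u),(g,v))$ arising from an arc $(u,v)$ of $H$, and such a product flow loads $G$-arcs only during the first phase and $H$-arcs only during the second.

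Next I would compute the load on each arc. A $G$-arc $((u,h),(v,h))$ is used exactly by those pairs whose second coordinate is frozen at $h$ during the first phase, that is, pairs with $x=h$ and $y$ arbitrary, and whose $G$-flow from $s$ to $t$ traverses $(u,v)$. Summing over the $n_H$ choices of $y$ and over all pairs $(s,t)$ gives a load of $n_H\cdot\phi^G(u,v)\le n_H\,\phi^G_{\max}$, where $\phi^G(u,v)$ is the load of $(u,v)$ under $f^G$. Symmetrically, the load on an $H$-arc is $n_G\cdot\phi^H(u,v)\le n_G\,\phi^H_{\max}$. Since $G\times H$ has $n_Gn_H$ vertices, the resulting congestion is at most
$$
\frac{\max\{n_H\,\phi^G_{\max},\,n_G\,\phi^H_{\max}\}}{n_Gn_H}=\max\Bigl\{\frac{\phi^G_{\max}}{n_G},\,\frac{\phi^H_{\max}}{n_H}\Bigr\}=\max\{\rho_G,\rho_H\}.
$$
As the congestion of $G\times H$ is the minimum over all flows, it is at most this value, hence at most the larger of $\rho_G$ and $\rho_H$; in particular it cannot be greater than both, as claimed.

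The computation is essentially bookkeeping, so the only point demanding care is the accounting in the load estimate: one must verify that freezing a coordinate throughout a phase contributes precisely a factor $n_H$ (respectively $n_G$) from the free endpoint coordinate, and that this factor exactly cancels the extra vertices of the product in the final ratio. I expect this careful count of which ordered pairs traverse a given arc to be the main, though modest, obstacle; everything else is immediate from the definition of congestion.
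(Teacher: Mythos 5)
Your proof is correct and follows essentially the same route as the paper's: both construct a two-phase product flow that routes each pair through the intermediate vertex sharing one coordinate with the source and the other with the target, and both obtain the load bounds $n_H\phi^G_{\max}$ on $G$-arcs and $n_G\phi^H_{\max}$ on $H$-arcs, giving congestion at most $\max\{\rho_G,\rho_H\}$.
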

\begin{proof}[Sketch of Proof]
Let us refer to the cartesian product of a vertex of $G$ with $H$ as a horizontal copy of $H$ within $G\mathord{\times}H$. Similarly, the cartesian product of $G$ with a vertex of $H$ will be a vertical copy of $G$. If $u$ and $v$ are vertices of $G$ and $H$, we call $u$ the vertical coordinate of $u\mathord{\times}v$ and $v$ its horizontal coordinate.

Consider flows on $G$ and $H$ that provide the congestion of these two graphs and assign them to the vertical copies of $G$ and the horizontal copies $H$ within $G\mathord{\times}H$. This already serves all the pairs $(s,t)$ of vertices that lie in the same copy of either $G$ or $H$. For the other pairs $(s,t)$, split the path into two paths through the intermediate vertex that has the same horizontal coordinate as $s$ and the same vertical coordinate as $t$. Then use the two corresponding paths in $G$ and $H$. This has the effect of multiplying the maximum flow $\phi_G$ in the copies of $G$ by the number $n_H$ of vertices of $H$ and the maximum flow $\phi_H$ in the copies of $H$ by the number $n_G$ of vertices of $G$.

As a consequence, the congestion of $G\mathord{\times}H$ is at most
$$
\max \biggl\{\frac{\phi_Gn_H}{n_Gn_H}, \frac{\phi_Hn_G}{n_Gn_H}\biggr\}=\max \biggl\{\frac{\phi_G}{n_G}, \frac{\phi_H}{n_H}\biggr\}
$$
which, by our choice of the flows in $G$ and $H$ is the maximum between the congestion of $G$ and the congestion of $H$, as desired.
\end{proof}

Note that the expansion of cartesian products of graphs is studied in \cite{ChungTetali1998,Mohar1989,Tillich2000}. By Lemmas \ref{lem:congestion} and \ref{lem:cartesian}, the expansion of $G\mathord{\times}H$ is at least
$$
\frac{1}{2\max \{\rho_G, \rho_H \}}=\min\biggl\{ \frac{1}{2\rho_G}, \frac{1}{2\rho_H} \biggr\}
$$
where $\rho_G$ and $\rho_H$ denote the congestion of $G$ and $H$. 

We can now conclude with the proof of our second main result.

\begin{proof}[Proof of Theorem~\ref{thm:hiz}]
According to Corollary~\ref{cor:hiz} the graphs of half-integral zonotopes are isomorphic to the graphs of the cartesian products of hypercubes and graphical zonotopes of cycles. From Theorems~\ref{thm:cube}, \ref{thm:cycle}, and \ref{lem:hex}, we know that the congestion of the graphs of both these families of polytopes is bounded from above by $6/7$. According to Lemma~\ref{lem:cartesian}, this also holds for their cartesian products. It then follows from Lemma \ref{lem:congestion} that the expansion of half-integral zonotopes is bounded from below by $7/12$.
\end{proof}

\noindent{\bf Acknowledgment.}
The writing of this article was initiated while the first author was a visiting professor at the computer science department (LIPN) of the Université Paris 13 in the Fall of 2023.

\bibliography{HalfIntegralExpanders}

\providecommand{\MR}{\relax\ifhmode\unskip\space\fi MR }
\providecommand{\MRhref}[2]{%
  \href{http://www.ams.org/mathscinet-getitem?mr=#1}{#2}
}
\providecommand{\href}[2]{#2}
\begin{thebibliography}{10}

\bibitem{AnariLiuGharanVinzant2019}
Nima Anari, Kuikui Liu, Shayan~Oveis Gharan, and Cynthia Vinzant,
  \textsl{Log-concave polynomials {II}: high-dimensional walks and an {FPRAS}
  for counting bases of a matroid}, Proceedings of the 51st Annual ACM SIGACT
  Symposium on Theory of Computing ({STOC} 2019), 2019, pp.~1--12.

\bibitem{BraunPokutta2011}
G{\'a}bor Braun and Sebastian Pokutta, \textsl{Random half-integral polytopes},
  Operations Research Letters \textbf{39} (2011), no.~3, 204--207.

\bibitem{ChungTetali1998}
F.~R.~K. Chung and Prasad Tetali, \textsl{Isoperimetric inequalities for
  cartesian products of graphs}, Combinatorics, Probability and Computing
  \textbf{7} (1998), no.~2, 141--148.

\bibitem{DelPiaMichini2016}
Alberto {Del Pia} and Carla Michini, \textsl{On the diameter of lattice
  polytopes}, Discrete \& Computational Geometry \textbf{55} (2016), no.~3,
  681--687.

\bibitem{DezaManoussakisOnn2018}
Antoine Deza, George Manoussakis and Shmuel Onn, \textsl{Primitive zonotopes},
  Discrete \& Computational Geometry \textbf{60} (2018), no.~1, 27--39.

\bibitem{DezaPournin2022}
Antoine Deza and Lionel Pournin, \textsl{Primitive point packing}, Mathematika
  \textbf{68} (2022), no.~3, 979--1007.

\bibitem{EppsteinFrishberg2023}
David Eppstein and Daniel Frishberg, \textsl{Improved mixing for the convex
  polygon triangulation flip walk}, 50th International Colloquium on Automata,
  Languages, and Programming (ICALP 2023), Leibniz International Proceedings in
  Informatics (LIPIcs), vol. 261, 2023, pp.~56:1--56:17.

\bibitem{FederMihail1992}
Tom{\'a}s Feder and Milena Mihail, \textsl{Balanced matroids}, Proceedings of
  the 24th annual ACM symposium on Theory of Computing (STOC 1992), 1992,
  pp.~26--38.

\bibitem{GreeneZaslavsky1983}
Curtis Greene and Thomas Zaslavsky, \textsl{On the interpretation of {Whitney}
  numbers through arrangements of hyperplanes, zonotopes, non-{Radon}
  partitions, and orientations of graphs}, Transactions of the American
  Mathematical Society \textbf{280} (1983), no.~1, 97--126.

\bibitem{GuoJerrum2023}
Heng Guo and Mark Jerrum, \textsl{Counting vertices of integral polytopes
  defined by facets}, Discrete \& Computational Geometry \textbf{70} (2023),
  no.~3, 975--990.

\bibitem{Harper1964}
L.~H. Harper, \textsl{Optimal assignments of numbers to vertices}, Journal of
  the Society for Industrial and Applied Mathematics \textbf{12} (1964), no.~1,
  131--135.

\bibitem{Harper1966}
L.~H. Harper, \textsl{Optimal numberings and isoperimetric problems on graphs},
  Journal of Combinatorial Theory \textbf{1} (1966), no.~3, 385--393.

\bibitem{Hart1976}
Sergiu Hart, \textsl{A note on the edges of the $n$-cube}, Discrete Mathematics
  \textbf{14} (1976), no.~2, 157--163.

\bibitem{HooryLinialWigderson2006}
Shlomo Hoory, Nathan Linial and Avi Wigderson, \textsl{Expander graphs and
  their applications}, Bulletin of the American Mathematical Society
  \textbf{43} (2006), no.~4, 439--561.

\bibitem{Kaibel2004}
Volker Kaibel, \textsl{On the expansion of graphs of $0/1$-polytopes}, The
  sharpest cut: the impact of {Manfred} {Padberg} and his work, {MOS}-{SIAM}
  Series on Optimization, Mathematical Optimization Society/Society for
  Industrial and Applied Mathematics, 2004, pp.~199--216.

\bibitem{Kalai2017}
Gil Kalai, \textsl{Polytope skeletons and paths}, Handbook of Discrete and
  Computational Geometry (Jacob~E. Goodman, Joseph O'Rourke and Csaba~D.
  T{\'o}th, eds.), CRC Press, 2017, pp.~505--532.

\bibitem{KwokLauTung2022}
Tsz~Chiu Kwok, Lap~Chi Lau and Kam~Chuen Tung, \textsl{Cheeger inequalities for
  vertex expansion and reweighted eigenvalues}, 63rd Annual Symposium on
  Foundations of Computer Science (FOCS 2022), 2022, pp.~366--377.

\bibitem{LerouxRademacher2023}
Brett Leroux and Luis Rademacher, \textsl{Expansion of random $0/1$ polytopes},
  Random Structures \& Algorithms \textbf{64} (2024), no.~2, 309--319.

\bibitem{Lindsey1964}
John~H. Lindsey, II, \textsl{Assignment of numbers to vertices}, American
  Mathematical Monthly \textbf{71} (1964), 508--516.

\bibitem{Mohar1989}
Bojan Mohar, \textsl{Isoperimetric numbers of graphs}, Journal of Combinatorial
  Theory, Series B \textbf{47} (1989), no.~3, 274--291.

\bibitem{PadrolPilaudPoullot2023}
Arnau Padrol, Vincent Pilaud and Germain Poullot, \textsl{Deformed graphical
  zonotopes}, Discrete \& Computational Geometry, \emph{to appear} (2023).

\bibitem{Sinclair92}
Alistair Sinclair, \textsl{Improved bounds for mixing rates of markov chains
  and multicommodity flow}, Combinatorics, Probability and Computing \textbf{1}
  (1992), 351--370.

\bibitem{Tillich2000}
Jean-Pierre Tillich, \textsl{Edge isoperimetric inequalities for product
  graphs}, Discrete Mathematics \textbf{213} (2000), no.~1--3, 291--320.

\end{thebibliography}
\bibliographystyle{ijmart}

\end{document}